\begin{document}

\title{On the  $\pd$- and $\barpd$-Operators  \\of a Generalized Complex Structure
\footnote{Mathematics Subject Classification: 15A66, 17B63, 17B66,
53C15, 53D17, 57R15, 58A10}}
\author{
 \textsc{Zhuo Chen} \\
{\small Peking University, Department of Mathematics} \\
 {\small   Beijing 100871, China.} \\
{\small
\href{mailto:chenzhuott@gmail.com}{\texttt{chenzhuott@gmail.com}}}}

\date{}
\maketitle

\begin{abstract}

In this note, we prove that the   $\pd$- and $\barpd$-operators
introduced by Gualtieri for a generalized complex structure coincide
with the  $\bdees$- and $\bdel$-operators introduced by Alekseev-Xu
for Evens-Lu-Weinstein modules  of a Lie bialgebroid.

\end{abstract}

 \tableofcontents


\section{Introduction}
Generalized complex structures
\cites{MR2013140,GualtieriThesis,MR2375780} have been extensively
studied recently due to their close connection with mirror symmetry.
They include both symplectic and complex structures as extreme
cases.  Gualtieri defined the   $\pd$-  and $\barpd$-operators for
any twisted generalized complex structure in the same way the $\pd$-
and  $\barpd$-operators are defined in complex geometry
\cites{GualtieriThesis,math/0703298}.

A Lie bialgebroid, as introduced by Mackenzie \& Xu, is a pair of
Lie algebroids $(A, A^*)$ satisfying some compatibility condition
\cites{MR1262213, MR1362125}. They appear naturally in many places
in Poisson geometry. In \cite{AlekseevXu}, two differential
operators $\bdees$, $\bdel$ were introduced for Evens-Lu-Weinstein
modules of a Lie bialgebroid as follows.

Consider a pair of (real or complex) Lie algebroid structures on a
vector bundle $A$ and its dual $A^*$. Assume that the line bundle
(real or complex) $\module=(\wedge^{top} A^*\otimes\wedge^{top}
T^*M)^{\thalf}$ exists, then $\module$ is a module over $A^*$, as
discovered by Evens, Lu and Weinstein \cite{MR1726784}. The Lie
algebroid structures of $A^*$ and $A$ induce two natural
differential operators $\bdees:\sections{\wedge^k
A\otimes\module}\to
\sections{\wedge^{k+1} A\otimes\module}$ and
$\bdel:\sections{\wedge^k A\otimes\module} \to\sections{\wedge^{k-1}
A\otimes\module}$ (see Equations \eqref{6} to \eqref{11}).

Since a generalized complex structure $\JJ$ induces a (complex) Lie
bialgebroid $(\nolL,\olL)$, where $\nolL$ and $\olL$ are,
respectively, the $(+i)$ and $(-i)$-eigenspaces  of $\JJ$,  it is
tempting to investigate the relations between the operators $\pd$,
$\barpd$, $\bdees$, $\bdel$. In this note, we show that $\pd$ and
$\barpd$ essentially coincide with $\bdees$ and $\bdel$
respectively, under some natural isomorphisms.
\paragraph{Acknowledgments}
 The author  thanks   Camille Laurent-Gengoux, Yi Lin and
 Ping Xu for  useful discussions and comments. This research was
 partially supported by NSFC, grant 10871007.

\section{Courant algebroids and Lie bialgebroids}
In this article, all vector bundles are complex vector bundles.
Likewise,  Lie algebroids are always complex Lie algebroids.

A (complex) Courant algebroid consists of a  vector bundle $\pi:E\to
M$, a nondegenerate pseudo-metric $\ip{\cdot}{\cdot}$ on the fibers
of $\pi$, a bundle map $\rho:~E\to \TCM$, called anchor, and a
$\CC$-bilinear operation $\db{}{}$ on $\sections{E}$ called Dorfman
bracket, which, for all $f\in\cinf{M,\CC}$ and
$z_1,z_2,z_3\in\sections{E}$ satisfy the relations
\begin{align}
& \db{z_1}{(\db{z_2}{z_3})}=\db{(\db{z_1}{z_2})}{z_3}+\db{z_2}{(\db{z_1}{z_3})}; \label{g1} \\
& \rho(\db{z_1}{z_2})=\lb{\rho(z_1)}{\rho(z_2)};  \\
& \db{z_1}{fz_2}=\big(\rho(x)f\big)z_2+f(\db{z_1}{z_2});  \\
& \db{z_1}{z_2}+\db{z_2}{z_1}=2\DD\ip{z_1}{z_2}; \label{g4} \\
& \db{\DD f}{z_1}=0;  \\
&
\rho(z_1)\ip{z_2}{z_3}=\ip{\db{z_1}{z_2}}{z}+\ip{z_2}{\db{z_1}{z_3}}
,
\end{align} where $\DD:\cinf{M,\CC}\to\sections{E}$ is the
$\CC$-linear map defined by $\ip{\DD f}{z_1}=\thalf\rho(z_1)f$.

The symmetric part of the Dorfman bracket is given by \eqref{g4}.
The Courant bracket is defined as the skew-symmetric part
$\cb{z_1}{z_2}=\thalf(\db{z_1}{z_2}-\db{z_2}{z_1})$ of the Dorfman
bracket. Thus we have the relation
$\db{z_1}{z_2}=\cb{z_1}{z_2}+\DD\ip{z_1}{z_2}$.

The definition of a Courant algebroid can be rephrased
using the Courant bracket instead of the Dorfman bracket
 \cite{math/9910078}.

A Dirac structure is a smooth subbundle $A\to M$ of
the Courant algebroid $E$, which is maximal isotropic
 with respect to the pseudo-metric and whose
 space of sections is closed under (necessarily both) brackets.
Thus a Dirac structure inherits a canonical   Lie algebroid
structure \cite{MR1472888}.

Let $A\to M$ be a vector bundle. Assume that $A$ and its dual $A^*$
both carry a Lie algebroid structure with anchor maps $\anchor:A\to
{\TCM}$ and $\anchors:A^*\to {\TCM}$, brackets on sections
$\sections{A}\otimes_{\CC}\sections{A}\to\sections{A}: X\otimes
Y\mapsto\ba{X}{Y}$ and
$\sections{A^*}\otimes_{\CC}\sections{A^*}\to\sections{A^*}:
\theta\otimes \xi\mapsto\bas{\theta}{\xi}$, and differentials
$\dee:\sections{\wedge^{\bullet}A^*}\to\sections{\wedge^{\bullet+1}A^*}$
and
$\dees:\sections{\wedge^{\bullet}A}\to\sections{\wedge^{\bullet+1}A}$.

This pair of Lie algebroids $(A,A^*)$ is a Lie bialgebroid (or Manin
triple) \cites{MR1362125,MR1746902,MR1262213} if $\dees$ is a
derivation of the Gerstenhaber algebra $(\sections{\wedge\graded
A},\wedge,\ba{\cdot}{\cdot})$ or, equivalently, if $\dee$ is a
derivation of the Gerstenhaber algebra $(\sections{\wedge\graded
A^*},\wedge,\bas{\cdot}{\cdot})$. The link between Courant and Lie
bialgebroids is given by the following

\begin{thm}[\cite{MR1472888}]
\label{Thm:double} There is a 1-1 correspondence between Lie
bialgebroids and pairs of transversal Dirac structures in a Courant
algebroid.
\end{thm}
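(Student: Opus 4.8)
The plan is to prove the 1-1 correspondence in Theorem~\ref{Thm:double} by constructing explicit maps in both directions and checking they are mutually inverse. Starting from a Lie bialgebroid $(A, A^*)$, I would build the vector bundle $E = A \oplus A^*$, equip it with the canonical symmetric pairing $\ip{X + \theta}{Y + \xi} = \thalf(\xi(X) + \theta(Y))$, the anchor $\rho(X+\theta) = \anchor(X) + \anchors(\theta)$, and the bracket
\begin{equation*}
\db{X + \theta}{Y + \xi} = \ba{X}{Y} + \Lie_\theta Y - \iota_\xi \dees X + \bas{\theta}{\xi} + \Lie_X \xi - \iota_Y \dee\theta,
\end{equation*}
where the Lie derivatives and contractions are the Lie-algebroid ones for $A$ and $A^*$ acting on the opposite bundle. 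Then $A \hookrightarrow E$ and $A^* \hookrightarrow E$ are transversal, maximal isotropic, and involutive — so they are transversal Dirac structures.

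Conversely, given a Courant algebroid $E$ with transversal Dirac structures $A, B$, the pairing identifies $B \cong A^*$ (since $A$ is maximal isotropic and $A \cap B = 0$, $B = 0$, forces $\dim B = \operatorname{rk} E - \dim A = \dim A$, and the pairing restricts to a perfect pairing $A \times B \to \underline{\CC}$). Each of $A$ and $B$ inherits a Lie algebroid structure by the remark after the definition of Dirac structure; I would then pull the bracket and anchor of $B$ across the isomorphism $B \cong A^*$. The substantive content is verifying that this pair $(A, A^*)$ satisfies the Lie bialgebroid compatibility condition, i.e. that $\dees$ is a derivation of $\ba{\cdot}{\cdot}$. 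This follows by expanding $\dees\ba{X}{Y}$ using the formula $\dees \mu (\xi_0, \dots, \xi_k) = \sum (-1)^i \anchors(\xi_i) \mu(\dots \hat{\xi_i} \dots) + \sum (-1)^{i+j} \mu(\bas{\xi_i}{\xi_j}, \dots)$ and matching it against the Jacobi-type identity \eqref{g1} for the Dorfman bracket restricted to mixed arguments in $A$ and $A^* = B$.

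The main obstacle — and the bulk of the computation — is the second direction: showing that involutivity and isotropy of two transversal Dirac structures forces the Mackenzie--Xu compatibility. Concretely, one must extract from the Courant-algebroid axioms \eqref{g1}--\eqref{g4} the identity
\begin{equation*}
\dees\ba{X}{Y} = \ba{\dees X}{Y} + \ba{X}{\dees Y}
\end{equation*}
as an element of $\sections{\wedge^2 A \otimes A^*}$, evaluated by pairing against two sections of $A^*$. The key manipulations are: (i) the Leibniz rule \eqref{g4} lets one commute brackets past $C^\infty(M)$-multiplications so that tensoriality reduces everything to constant-coefficient checks on a local frame; (ii) the compatibility of $\rho$ with the bracket plus the two anchor conditions control the first-order terms; (iii) the isotropy conditions $\ip{X}{Y} = \ip{\theta}{\xi} = 0$ kill the $\DD$-terms coming from $\db{z}{z}$-type expressions, so the Dorfman and Courant brackets agree on the relevant pieces. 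I expect the bookkeeping of signs and of which contraction/Lie-derivative operator acts where to be the delicate part; the conceptual steps are each short.

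For the first direction, the verification that the proposed $\db{}{}$ on $A \oplus A^*$ actually satisfies \eqref{g1}--\eqref{g4} is the mirror-image calculation and uses the \emph{hypothesis} that $(A,A^*)$ is a bialgebroid (that $\dees$ is a derivation of $\ba{\cdot}{\cdot}$) precisely to get the Jacobi identity \eqref{g1} for mixed terms — the purely-$A$ and purely-$A^*$ cases of \eqref{g1} being just the Jacobi identities of the two Lie algebroids. Once both constructions are in place, checking $A \oplus A^* \mapsto (\text{its two Dirac structures}) \mapsto A \oplus A^*$ and the reverse composite are the identity is immediate from the explicit formulas, and the correspondence is established; I would cite \cite{MR1472888} for the original treatment and present only the skeleton here.
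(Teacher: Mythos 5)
The paper does not prove this theorem: it is quoted from Liu--Weinstein--Xu \cite{MR1472888}, and the only material the paper supplies is the explicit double construction --- the pseudo-metric \eqref{3}, the anchor $\anchor+\anchors$, and the Dorfman bracket \eqref{4} --- which is exactly the constructive half of your outline. Your sketch is the standard argument from that reference and is correct in structure (including the key point that the mixed-argument Jacobi identity is where the bialgebroid compatibility enters, and that in the converse direction the Courant axioms force the mixed bracket and hence the compatibility of $\dees$ with $\ba{\cdot}{\cdot}$), but it remains an outline: the two substantive computations you flag are described rather than carried out, so as written it is a correct plan rather than a complete proof.
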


More precisely, if the pair $(A,A^*)$ is a Lie bialgebroid, then the
vector bundle $A\oplus A^*\to M$ together with the pseudo-metric
\begin{equation}
\label{3}  \ip{X_1+\xi_1}{X_2+\xi_2}=
\thalf\big(\xi_1(X_2)+\xi_2(X_1)\big) ,
\end{equation}
the anchor map $\rho=\anchor+\anchors$ (whose dual is given by $\DD
f=\dee f+\dees f$ for $f\in\cinf{M,\CC}$) and the Dorfman bracket
\begin{equation}\label{4}
\db{(X_1+\xi_1)}{(X_2+\xi_2)}=
\big(\ba{X_1}{X_2}+\ld{\xi_1}X_2-\ii{\xi_2}(\dees X_1)\big)+
\big(\bas{\xi_1}{\xi_2}+\ld{X_1}\xi_2-\ii{X_2}(\dee\xi_1)\big)
\end{equation}
is a Courant algebroid of which $A$ and $A^*$
are transverse Dirac structures.
It is called the double of the Lie bialgebroid $(A,A^*)$.
Here $X_1,X_2$ denote arbitrary sections of $A$
and $\xi_1,\xi_2$ arbitrary sections of $A^*$.

An important example is that when $A={\TCM}$ is the tangent bundle
of a manifold $M$ and $A^*=\TsCM$ takes the trivial Lie algebroid
structure. Then ${\TCM}\oplus \TsCM$ has the standard Courant
algebroid structure. As observed by Severa  and Weinstein in
\cite{MR2023853}, the Dorfman bracket on ${\TCM}\oplus \TsCM$ can be
twisted by a closed $3$-form $H\in Z^3(M)$:
\begin{eqnarray*}
 &&\dbH{ (x_1+\eta_1) }{ (x_2+\eta_2) }  \\\nonumber
 &=&
\db{ (x_1+\eta_1) }{ (x_2+\eta_2) }+\ii{x_2}\ii{x_1}H\\
&=&[x_1,x_2]+\ld{x_1}\eta_2-\ld{x_2}\eta_1+\thalf d
\duality{\eta_1}{x_2}+\ii{x_2}\ii{x_1}H.
\end{eqnarray*}
And $\dbH{}{}$ defines a Courant algebroid structure on
${\TCM}\oplus \TsCM$, using the same inner product and anchor. The
corresponding Courant bracket is also twisted:
\begin{eqnarray}\nonumber
 &&\cb{ x_1+\eta_1 }{ x_2+\eta_2 }_H \\\nonumber
 &=&
\cb{ x_1+\eta_1 }{ x_2+\eta_2
}+\ii{x_2}\ii{x_1}H\\\label{twistedbracketH}
&=&[x_1,x_2]+\ld{x_1}\eta_2-\ld{x_2}\eta_1+\thalf d
(\duality{\eta_1}{x_2}-\duality{\eta_2}{x_1})+\ii{x_2}\ii{x_1}H.
\end{eqnarray}

\section{Clifford modules and Dirac generating operators}
\label{subsec:CliffordandDiracAAs}

 Let $V$ be a vector space of
dimension $r$ endowed with a nondegenerate symmetric bilinear form
$\ip{\cdot}{\cdot}$. Its Clifford algebra $\clifford{V}$ is defined
as the quotient of the tensor algebra $\oplus_{k=0}^r V^{\otimes r}$
by the relations $x\otimes y+y\otimes x=2\ip{x}{y}$ ($x,y\in V$). It
is naturally an associative $\ZZ_2$-graded algebra. Up to
isomorphisms, there exists a unique irreducible module $S$ of
$\clifford{V}$, called spin representation \cite{MR1636473}. The
vectors of $S$ are called spinors.

\begin{ex}\label{translagr}
Let $W$ be a vector space of dimension $r$. We can endow $V=W\oplus
W^*$ with the nondegenerate pairing defined in the same fashion as
in Eq. (\ref{3}). The representation of $\clifford{V}$ on
$S=\oplus_{k=0}^r \wedge^k W$ defined by $u\cdot w=u\wedge w$ and
$\xi\cdot w=\ii{\xi}w$, where $u\in W$, $\xi\in W^*$ and $w\in S$,
is the spin representation. Note that $S$ is $\ZZ$- and thus also
$\ZZ_2$-graded.
\end{ex}

Now let $\pi:E\to M$ be a vector bundle endowed with a nondegenerate
pseudo-metric  $\ip{\cdot}{\cdot}$ on its fibers and let
$\clifford{E}\to M$ be the associated
 bundle of Clifford algebras.
Assume there exists a smooth vector bundle $S\to M$ whose fiber
$S_m$ over a point $m\in M$ is the spin module of the Clifford
algebra $\clifford{E}_m$. Assume furthermore that $S$ is
$\ZZ_2$-graded: $S=S^0\oplus S^1$.

An operator $O$ on $\sections{S}$ is called even (or of degree~0) if
$O(S^i)\subset S^{i}$ and odd (or of degree~1) if $O(S^i)\subset
S^{i+1}$. Here $i\in\ZZ_2$.

\begin{ex} If the vector bundle $E$ decomposes as
the direct sum $A\oplus A^*$ of two transverse Lagrangian subbundles
as in Example~\ref{translagr}, then $S=\wedge A$. The multiplication
by a function $f\in\cinf{M,\CC}$ is an even operator on
$\sections{S}$ while the Clifford action of a section
$e\in\sections{E}$ is an odd operator on $\sections{S}$.
\end{ex}

If $O_1$ and $O_2$ are operators of degree~$d_1$ and $d_2$
respectively, then their commutator is the operator
$\lb{O_1}{O_2}=O_1\rond O_2-(-1)^{d_1 d_2}O_2\rond O_1$.

\begin{defn}[\cite{AlekseevXu}]
\label{Def:DiracGTR} A Dirac generating operator for $(E,\ip{~}{~})$
is an odd operator $\dirac$ on $\sections{S}$ satisfying the
following properties:
\begin{enumerate}
\item \label{conda} For all $f\in\cinf{M,\CC}$,
$\lb{\dirac}{f}\in\sections{E}$. This means that the operator
$\lb{\dirac}{f}$ is the Clifford action of some section of $E$.
\item \label{condb} For all $z_1,z_2\in\sections{E}$,
$\lb{\lb{\dirac}{z_1}}{z_2}\in\sections{E}$.
\item \label{condc} The square of $\dirac$ is the
multiplication by some function on $M$: that is
$\dirac^2\in\cinf{M,\CC}$.
\end{enumerate}
\end{defn}

Note that ``deriving operators'' of \cite{MR2103012} do not require
the assumption \ref{condc}.

\begin{thm}[\cite{AlekseevXu}]
Let $\dirac$ be a Dirac generating operator for a vector bundle
$\pi:E\to M$. Then there is a canonical Courant algebroid structure
on $E$. The anchor $\rho:E\to {\TCM}$ is defined by
$\rho(z)f=2\ip{\lb{\dirac}{f}}{z}=\lb{\lb{\dirac}{f}}{z}$, while the
Dorfman bracket reads $\db{z_1}{z_2}=\lb{\lb{\dirac}{z_1}}{z_2}$.
\end{thm}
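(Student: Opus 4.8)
The plan is to verify the three Courant algebroid axioms — or more precisely, to check that the data $(\rho, \db{}{})$ extracted from $\dirac$ via the stated formulas satisfies the defining relations \eqref{g1}--\eqref{g4} — by systematically exploiting the three properties of a Dirac generating operator together with elementary Clifford-algebra identities. First I would record the bookkeeping: since $\dirac$ is odd and sections of $E$ act as odd operators, the graded commutator $\lb{\dirac}{z}$ is even, and by property~\ref{condb} it is again the Clifford action of a section of $E$; likewise by property~\ref{conda}, $\lb{\dirac}{f}$ is the Clifford action of a section, so $\rho(z)f := \lb{\lb{\dirac}{f}}{z}$ makes sense as a function (the Clifford commutator of two sections is $2\ip{\cdot}{\cdot}$ times the identity, hence a function). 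So I would first establish that $\db{z_1}{z_2} := \lb{\lb{\dirac}{z_1}}{z_2}$ indeed lands in $\sections{E}$ and that $\rho$ is a well-defined bundle map, and check $\ip{\DD f}{z} = \thalf \rho(z) f$ so that $\DD f = \lb{\dirac}{f}$ is consistent with the Courant-algebroid $\DD$.

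Next I would derive the individual axioms. The Leibniz/Jacobi identity \eqref{g1} for $\db{}{}$ should fall out of the graded Jacobi identity for operator commutators applied to the triple $\dirac, z_1, z_2$, combined with property~\ref{condc} ($\dirac^2$ central, so $\lb{\dirac}{\lb{\dirac}{z}} = \thalf\lb{\dirac^2}{z} = 0$ since $\dirac^2$ is a function and $z$ acts while… — more carefully, $\lb{\lb{\dirac}{\dirac}}{z} = 2\lb{\dirac^2}{z}$, and one uses that this vanishes or is accounted for). The compatibility $\rho(\db{z_1}{z_2}) = \lb{\rho(z_1)}{\rho(z_2)}$ and the Leibniz rule in the function argument $\db{z_1}{fz_2} = (\rho(z_1)f)z_2 + f\db{z_1}{z_2}$ should again be consequences of nested commutator identities, using that $f$ acts as an even operator and repeatedly invoking the graded Jacobi identity. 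For the symmetric part \eqref{g4}, I would compute $\db{z_1}{z_2} + \db{z_2}{z_1} = \lb{\lb{\dirac}{z_1}}{z_2} + \lb{\lb{\dirac}{z_2}}{z_1}$ and massage this using the Clifford relation $z_1 z_2 + z_2 z_1 = 2\ip{z_1}{z_2}$ to rewrite it as $\lb{\dirac}{z_1 z_2 + z_2 z_1} = \lb{\dirac}{2\ip{z_1}{z_2}} = 2\lb{\dirac}{\ip{z_1}{z_2}}$, which is $2\DD\ip{z_1}{z_2}$ by the definition of $\DD$ on functions. The remaining relations ($\db{\DD f}{z} = 0$, the metric-invariance relation) follow similarly: the first from $\lb{\dirac}{\lb{\dirac}{f}} = 0$, the last from pairing and the derivation property of $\lb{\dirac}{\cdot}$ on the Clifford product.

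I would organize all of this as a sequence of short lemmas, each isolating one commutator manipulation, and the main technical tool throughout is the graded Jacobi identity $\lb{O_1}{\lb{O_2}{O_3}} = \lb{\lb{O_1}{O_2}}{O_3} + (-1)^{d_1 d_2}\lb{O_2}{\lb{O_1}{O_3}}$ for the three relevant degrees, specialized to the cases where one or two of the operators are $\dirac$, a section, or a function.

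The main obstacle I anticipate is twofold. First, bookkeeping the $\ZZ_2$-signs correctly: $\dirac$ and sections are odd, functions even, so the graded commutators pick up signs that must be tracked with care, and it is easy to produce a spurious sign that breaks, say, the skew-symmetry of the Courant bracket or the Leibniz rule. Second — and this is the genuine content rather than bookkeeping — one must confirm that the various iterated commutators actually \emph{close up} inside $\sections{E}$ (or inside $\cinf{M,\CC}$) rather than producing higher-order operators; this is exactly where properties~\ref{conda}--\ref{condc} are indispensable, and the proof must invoke them at precisely the right moments. In particular, showing that $\db{z_1}{z_2}$ is $\cinf{M,\CC}$-linear-plus-derivation in the right variables, and that $\rho$ is tensorial, requires knowing that the "error terms" are themselves Clifford actions of sections, which is a nontrivial structural input. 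A secondary check is verifying that the anchor condition $\ip{\rho(z)x}{\cdot}$ and the compatibility $\rho\circ\DD = 0$ hold, i.e. that $\rho(z)(fg) = (\rho(z)f)g + f(\rho(z)g)$ so that $\rho(z)$ is a genuine derivation (a vector field); this follows from $\lb{\dirac}{fg} = \lb{\dirac}{f}g + f\lb{\dirac}{g}$ and one more commutator with $z$, but again the sign discipline matters.
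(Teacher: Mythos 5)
Your plan is correct and is essentially the standard derived-bracket argument by which this theorem is proved in the cited reference \cite{AlekseevXu} (the paper itself quotes the result without proof): all axioms reduce to the graded Jacobi identity for operator commutators, the Clifford relation $z_1z_2+z_2z_1=2\ip{z_1}{z_2}$ for the symmetric part, the centrality of $\dirac^2\in\cinf{M,\CC}$ for the Leibniz identity \eqref{g1} and for $\db{\DD f}{z_1}=0$, and properties \ref{conda}--\ref{condb} to guarantee closure in $\sections{E}$. The only slip is a harmless factor: $\lb{\dirac}{\lb{\dirac}{z}}=\thalf\lb{\lb{\dirac}{\dirac}}{z}=\lb{\dirac^2}{z}$ (not $\thalf\lb{\dirac^2}{z}$), which vanishes in any case since $\dirac^2$ is a function.
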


We follow the same setup as in \cite{AlekseevXu} and
\cite{ChenStienon}.

Let $(A,\ba{\cdot}{\cdot},\anchor)$ and
$(A^*,\bas{\cdot}{\cdot},\anchors)$ be a pair of Lie algebroids,
where $A$ is of rank-$r$ and the base manifold $M$ is of
dimension-$m$. The line bundle $\wedge^r A^*\otimes\wedge^m {\TsCM}$
is a module over the Lie algebroid $A^*$ \cite{MR1726784}: a section
$\alpha\in\sections{A^*}$ acts on $\sections{\wedge^r
A^*\otimes\wedge^m {\TsCM}}$ by
\begin{eqnarray}\nonumber
&&\alpha\cdot(\alpha_1\wedge\cdots\wedge\alpha_r\otimes\mu)
\\\label{6}
&=& \sum_{i=1}^r
\big(\alpha_1\wedge\cdots\wedge\bas{\alpha}{\alpha_i}
\wedge\cdots\wedge\alpha_r\otimes\mu\big)
 +\alpha_1\wedge\cdots\wedge\alpha_r\otimes\ld{\anchors(\alpha)}\mu .
 \end{eqnarray}
If it exists, the square root $\module=(\wedge^r A^*\otimes\wedge^m
{\TsCM})^{\thalf}$ of this line bundle is also a module over $A^*$.
One can thus define a differential operator
\begin{equation}\label{7}
\bdees:\sections{\wedge^k A\otimes\module} \to\sections{\wedge^{k+1}
A\otimes\module} .\end{equation} Similarly, $\explicit$ is ---
provided it exists --- a module over $A$. Hence we obtain a
differential operator
\begin{equation}\label{8}
\sections{\wedge^k A^*\otimes\explicit}\to
\sections{\wedge^{k+1} A^*\otimes\explicit} .\end{equation}
But the isomorphisms of vector bundles
\begin{equation}\label{9}
\wedge^k A^*\isomorphism \wedge^k A^* \otimes
\wedge^{r-k}A^*\otimes\wedge^{r-k}A\isomorphism\wedge^{r-k}
A\otimes\wedge^r A^* \end{equation} and
\begin{equation}\label{10}
\wedge^r A^*\otimes\explicit\isomorphism (\wedge^r
A^*\otimes\wedge^m {\TsCM})^{\thalf} \end{equation} imply that
\begin{multline} \label{m1}
\wedge^k A^*\otimes\explicit \isomorphism
\wedge^{r-k}A\otimes\wedge^r A^*\otimes \explicit \\ \isomorphism
\wedge^{r-k}A\otimes (\wedge^r A^*\otimes\wedge^m {\TsCM})^{\thalf}
.\end{multline} Therefore, one ends up with a differential operator
\begin{equation}\label{11} \bdel:\sections{\wedge^k A\otimes\module}
\to\sections{\wedge^{k-1} A\otimes\module} .\end{equation}


The following theorem is  proved in \cite{ChenStienon}.

\begin{thm}\label{Thm:A}
The pair of Lie algebroids $(A,A^*)$ is a Lie bialgebroid if, and
only if, $\bdirac^2\in\cinf{M,\CC}$, i.e. the square of the operator
$\bdirac=\bdees+\bdel$: $\sections{\wedge A\otimes\module}\to
\sections{\wedge A\otimes\module}$ is the multiplication by some function
$\fsmile\in\cinf{M,\CC}$. Moreover $\bdirac^2_*=\fsmile$, where
$\bdiracs=\bdee+\bdels$ is defined analogously to $\bdirac$ by
exchanging the roles of $A$ and $\As$.
\end{thm}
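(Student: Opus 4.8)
The plan is to treat $\bdirac=\bdees+\bdel$ as a candidate Dirac generating operator for $(E,\ip{\cdot}{\cdot})$, where $E=A\oplus A^{*}$ carries the pseudo-metric~\eqref{3} and $S=\wedge A\otimes\module$ is its spinor bundle, and to play the two directions of the Alekseev--Xu correspondence against the double construction of Theorem~\ref{Thm:double}. I begin with two facts valid for an \emph{arbitrary} pair of Lie algebroids $(A,A^{*})$. First, $\module$ is a genuine (flat) module over $A^{*}$ and, symmetrically, $\explicit$ is a flat module over $A$; hence $\bdees$ and $\bdel$ are honest Lie-algebroid cohomology and homology differentials, $\bdees^{2}=0=\bdel^{2}$, and $\bdirac^{2}=\bdees\bdel+\bdel\bdees$ is an operator of (even) degree~$0$. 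Second --- the Key Lemma --- $\bdirac$ satisfies conditions~\ref{conda} and~\ref{condb} of Definition~\ref{Def:DiracGTR}: the operator $\lb{\bdirac}{f}$ is the Clifford action of $\dee f+\dees f$ (in a local trivialisation of $\module$ one sees at once that $\lb{\bdees}{f}$ is wedging by $\dees f\in\sections{A}$ and $\lb{\bdel}{f}=\ii{\dee f}$, $\dee f\in\sections{A^{*}}$), and $\lb{\lb{\bdirac}{z_{1}}}{z_{2}}$ is the Clifford action of the section of $E$ given by the right-hand side of~\eqref{4} --- the latter being the standard fact that the derived bracket of a Lie-algebroid differential recovers the algebroid structure, applied to $\bdees$ and $\bdel$ separately. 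In particular $\bdirac$ induces on $E$ the anchor $\rho=\anchor+\anchors$ through $\rho(z)f=\lb{\lb{\bdirac}{f}}{z}$.

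Granting the Key Lemma, one implication is immediate. If $\bdirac^{2}=\fsmile\in\cinf{M,\CC}$, then $\bdirac$ is a Dirac generating operator, so by the Alekseev--Xu theorem quoted above $E$ carries a Courant algebroid structure with anchor $\anchor+\anchors$ and Dorfman bracket $\db{z_{1}}{z_{2}}=\lb{\lb{\bdirac}{z_{1}}}{z_{2}}$, which by the Key Lemma is precisely~\eqref{4}, the pseudo-metric being~\eqref{3}. In this Courant algebroid $A$ and $A^{*}$ are maximal isotropic (by~\eqref{3}) and closed under the bracket (by~\eqref{4}: $\db{X_{1}}{X_{2}}=\ba{X_{1}}{X_{2}}\in\sections{A}$ and $\db{\xi_{1}}{\xi_{2}}=\bas{\xi_{1}}{\xi_{2}}\in\sections{A^{*}}$), hence are transverse Dirac structures. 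Theorem~\ref{Thm:double} then forces $(A,A^{*})$ to be a Lie bialgebroid.

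The converse is the real content. Assume $(A,A^{*})$ is a Lie bialgebroid. Theorem~\ref{Thm:double} already produces the double Courant algebroid $E=A\oplus A^{*}$, and the Key Lemma shows that $\bdirac$ reproduces its anchor and Dorfman bracket and satisfies~\ref{conda} and~\ref{condb}; what remains is~\ref{condc}, i.e.\ $\bdirac^{2}\in\cinf{M,\CC}$. I would prove this locally: trivialising $\module$ by a nowhere-zero section, write $\bdees$ as the Chevalley--Eilenberg differential $\dees$ of $A^{*}$ plus wedging by half the modular cocycle $\theta_{A^{*}}\in\sections{A}$, and $\bdel$ as a generator $\partial$ of the Gerstenhaber bracket $\ba{\cdot}{\cdot}$ of $A$ plus contraction by half the modular cocycle $\theta_{A}\in\sections{A^{*}}$ (both cocycles being $\dees$-, resp.\ $\dee$-closed). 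Expanding $\bdirac^{2}$ as the sum of the four graded commutators of these pieces, the purely modular term is a function, and one must show that the genuinely first-order contributions of the remaining three cancel. This is where the hypothesis enters: by definition of a Lie bialgebroid $\dees$ is a derivation of $\ba{\cdot}{\cdot}$, equivalently (Kosmann--Schwarzbach's ``exact Gerstenhaber'' criterion) $\lb{\dees}{\partial}$ is an operator of order $\le 1$ cut out by a single section of $A$; together with the Cartan identities for the two mixed commutators and the closedness of $\theta_{A^{*}}$ and $\theta_{A}$, this makes all the first-order terms annihilate one another, leaving a function. Since being a function is a local, trivialisation-independent property, $\bdirac^{2}\in\cinf{M,\CC}$ globally. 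The main obstacle is precisely this cancellation; the conceptually cleanest way to organise it is the ``big bracket'' formalism of Kosmann--Schwarzbach and Roytenberg, in which a Lie bialgebroid structure on $(A,A^{*})$ is a single degree-$3$ function $\mu$ on the graded symplectic manifold $T^{*}[2]A[1]$ with $\{\mu,\mu\}=0$, and $\bdirac$ is a quantisation of $\mu$, so that $2\bdirac^{2}$ equals the quantisation of $\{\mu,\mu\}$ up to a scalar normal-ordering term; as $\{\mu,\mu\}=0$ this yields $\bdirac^{2}=\fsmile\in\cinf{M,\CC}$ directly.

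Finally, the ``moreover'' is a symmetry already built into the definitions. Reading the chain~\eqref{9}--\eqref{m1} in both directions gives a canonical bundle isomorphism $\Phi\colon\wedge A\otimes\module\to\wedge A^{*}\otimes\explicit$. By their very construction one has $\bdels=\Phi\circ\bdees\circ\Phi^{-1}$ and $\bdee=\Phi\circ\bdel\circ\Phi^{-1}$: indeed $\bdees$ is the Chevalley--Eilenberg differential of $A^{*}$ with coefficients in $\module$, and $\bdel$ is, up to transport along $\Phi$, that of $A$ with coefficients in $\explicit$; running these same two constructions with the roles of $A$ and $A^{*}$ interchanged produces precisely $\bdee$ and $\bdels$. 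Hence $\bdiracs=\bdee+\bdels=\Phi\circ\bdirac\circ\Phi^{-1}$, and since $\bdirac^{2}=\fsmile$ is multiplication by a function, conjugation by the bundle map $\Phi$ leaves it unchanged: $\bdiracs^{2}=\fsmile$, as claimed.
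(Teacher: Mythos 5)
The paper never proves Theorem \ref{Thm:A}: it is imported wholesale from \cite{ChenStienon}, so there is no internal argument to compare you against. That said, your architecture is essentially the one used in \cite{ChenStienon} and \cite{AlekseevXu}. Your genuinely structural observation --- that conditions \ref{conda} and \ref{condb} of Definition \ref{Def:DiracGTR} hold for $\bdirac$ \emph{without} any compatibility hypothesis, the derived bracket $\lb{\lb{\bdirac}{z_1}}{z_2}$ reproducing \eqref{4} because the modular terms already drop out at the first commutator --- is exactly what reduces the ``if'' direction to the Alekseev--Xu theorem plus Theorem \ref{Thm:double}, and that half of your argument is complete and correct.

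The ``only if'' direction, however, is where the entire content of the theorem sits, and there you have an announcement rather than a proof. After writing $\bdirac=\dees-\del+\thalf(X_0\wedge\cdot\,+\ii{\xi_0})$, the cancellation you assert amounts to the identity $\dees\del+\del\dees=\thalf(\ld{X_0}+\ld{\xi_0})$ on $\sections{\wedge A}$ (this is precisely Theorem \ref{thmlaplacian}, itself quoted from \cite{ChenStienon}) together with its extension to the module factor $\module$; invoking ``the bialgebroid condition makes the first-order terms annihilate one another'' names the right ingredients but does not carry out the computation, and the big-bracket shortcut you offer still requires verifying that the normal-ordering defect of the quantisation is a scalar --- which is the same computation in different clothing. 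A smaller point: in the ``moreover'' paragraph, the transports \eqref{9}--\eqref{m1} read in the two directions are mutually inverse only up to the degree-dependent signs recorded in \eqref{diverse}--\eqref{residents}, so $\bdiracs$ is conjugate to $\bdirac$ only up to such signs; the conclusion $\bdiracs^2=\fsmile$ survives because the discrepancy is degree-wise scalar and $\bdirac^2$ preserves degree, but one must still check that the cross terms $\bdee\bdels+\bdels\bdee$ acquire the \emph{same} sign, not opposite ones, relative to $\bdees\bdel+\bdel\bdees$. Nothing in your route is a wrong turn, but the hard half of the equivalence remains to be written down.
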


\section{Generalized complex geometry}\label{subsec:GCM} In this
section, we fix a real $2n$-dimensional manifold $M$ and denote the
tangent and cotangent bundle of $M$ by $T$ and $\Ts$, respectively.
Let $\TC$ and $\TsC$ be, respectively, the complexification of $T$
and $\Ts$. The first vital ingredient in $\TC\oplus \TsC$ is the
natural pairing:
\begin{equation}\label{naturalpairing}
\ip{x_1+\eta_1}{x_2+\eta_2}=\half(\duality{x_1}{\eta_2}
+\duality{x_2}{\eta_1}),\quad\quad \forall x_i\in \TC, \eta_i\in
\TsC.
\end{equation}
Thus we have the Clifford algebra $\clifford{\TC\oplus \TsC}$, which
acts on the spinor bundle $\GCCM\defbe \oplus_{i=0}^{2n} \wedge^i
\TsC$ via
$$
(x+\eta)\cdot \rho= \ii{x}\rho+\eta\wedge\rho,\quad\forall
\rho\in\GCCM.
$$

Introduce a $\CC$-linear map $\cdot^T:~\GCCM\lon \GCCM$   by
$$(\eta_1\wedge\cdots\wedge \eta_j)^T
=\eta_j\wedge\cdots\wedge \eta_1.$$
  The Mukai
pairing $\Mukai{~}{~}$: $\GCCM\times \GCCM\lon \wedge^{2n}(\TsC)$ is
defined by
$$
\Mukai{\chi}{\omega}=[\chi^T\wedge \omega]^{2n},
$$ where $[~]^{2n}$ indicates the top degree  component of the
product.
  Explicitly, if   $\chi=\sum_{i=0}^{2n} \chi_i$,
$\omega=\sum_{i=0}^{2n} \omega_i$, where $\chi_i,\omega_i\in
\wedge^i \TsC$, then
$$
\Mukai{\chi}{\omega}=\sum_{i=0}^{2n}(-1)^{\frac{i(i-1)}{2}}\chi_i\wedge
\omega_{2n-i}.
$$

The following properties are standard \cite{GualtieriThesis}:
\begin{equation}\label{Mukasymmetry}
\Mukai{\chi}{\omega}=(-1)^n \Mukai{\omega}{\chi},
\end{equation}
\begin{equation}\label{Mukaequivariant}
\Mukai{\phi\wedge\chi}{\omega}+\Mukai{\chi}{\phi\wedge\omega}=0,
\end{equation}
for all $\chi,\omega\in \GCCM$, $\phi\in \wedge^2 \TsC $.

Consider a real, closed $3$-form $H\in Z^3(M)$ which induces a
twisted differential operator
$$
\dH=d+  H\wedge( \cdot).
$$

\begin{defn}\cites{GualtieriThesis,MR2265463} A twisted generalized complex structure with respect to
$H$ is determined by any of the following three equivalent objects:
\begin{itemize}
\item[(i)] A real automorphism $\JJ$ of $T\oplus \Ts$, which squares
to $-1$,   is orthogonal with respect to the natural pairing
(\ref{naturalpairing}), and has vanishing Nijenhuis tensor, i.e.,
for all $z_1,z_2\in \sections{T\oplus \Ts}$,
$$
N(z_1,z_2)\defbe -\cb{\JJ z_1}{\JJ z_2}_H+\JJ\cb{\JJ z_1}{z_2}_H
+\JJ\cb{ z_1}{\JJ z_2}_H+\cb{ z_1}{z_2}_H=0.
$$
Here $\cb{}{}_H$ is the twisted Courant bracket defined in Eq.
(\ref{twistedbracketH}).
\item[(ii)] A twisted Dirac structure $L\subset \TC\oplus \TsC$ with
respect to $H$, which satisfies $L\cap \overline{L}=\set{0}$.
\item[(iii)]A line subbundle $N$ of $\GCCM=\wedge^{\bullet}(\TsC)$ generated at each point by a
form   $u$, such that
$$\nolL=\set{X\in \TC\oplus \TsC~|~
X\cdot u=0 }
$$is maximally isotropic,
  $\Mukai{u}{\bar{u}}  \neq 0 $, and
$$\dH u=e\cdot u,$$
for some $e\in\sections{T \oplus \Ts }$.
\end{itemize}
\end{defn}
The line bundle $N $ in (iii) is called the pure spinor line bundle
corresponding to $\nolL$.
\begin{rmk}\rm
 Also see \cite{AlekseevXu} for the relation between Dirac
 structures and Dirac generating operators.
\end{rmk}
As a generalization of the usual    $\pd$- and $\barpd$-operators in
complex geometry, Gualtieri introduced the  $\pd$- and
$\barpd$-operators for any twisted generalized complex structure
\cite{GualtieriThesis}. We recall its construction briefly below.

Let $\JJ$ be   a twisted generalized complex structure and
$\nolL\subset \TC\oplus \TsC$ be the $+i$-eigenspace  of $\JJ$.
$\nolL$ is a twisted Dirac structure and satisfies $\nolL\cap \olL
=\set{0}$. We will regard $\olL=\nolL^*$ by defining the canonical
pairing between $\nolL$ and $\olL $ by
\begin{equation}\label{Eqt:dualityLolL}
\duality{X}{\theta}=2\ip{X}{\theta},\quad \forall ~ X\in \nolL, e\in
\olL .
\end{equation}

Set $N_0=N$, $N_k=\wedge^k\olL \cdot N$ ($k=0,\cdots,2n$). Then
$\olN_k=N_{2n-k}$ and especially, $N_{2n}=\olN$ is the pure spinor
of $\olL $, and we have a decomposition $$\GCCM=N_0\oplus N_1\oplus
\cdots \oplus N_{2n}.$$

It is proved that one can decompose \cite{GualtieriThesis}
$$
\dH=\pd+\barpd.
$$
Here $\pd:~\sections{N_{\bullet}}\lon
\sections{N_{\bullet-1}}$ and $\barpd:~\sections{N_{\bullet}}\lon \sections{N_{\bullet+1}}$
(or $~\sections{\olN_{\bullet}}\lon \sections{\olN_{\bullet-1}}$ )
 are defined by:
$$
\pd (n_k) \defbe pr_{N_{k-1}}(\dH n_k),\quad\barpd (n_k) \defbe
pr_{N_{k+1}}(\dH n_k),\quad\forall~ n_k\in
\sections{N_k}.
$$

\section{The main theorem}\label{Sec:mainthm}
Given a generalized complex structure $\JJ$ as above, it is clear
that $(\nolL,\olL)$ is a Lie bialgebroid (regarding $\nolL^*=\olL$
by Eq. (\ref{Eqt:dualityLolL})). We   prove that the operators
$\bdees$ and $\bdel$ for this particular situation are essentially
$\pd $ and $\barpd$ (Theorem \ref{Thm:Main}).

We continue the notations in Section \ref{subsec:GCM}  and let   $u$
be a nowhere vanishing local section of $N$.
Assume that $\V\in
\sections{\wedge^{2n}\nolL}$ satisfies $ \V\cdot \olu=u$. Hence $
\olV\cdot u=\olu$ and
$$
\duality{\V}{\olV}u=(-1)^n\V\cdot \olV\cdot u=(-1)^n u,\quad
\duality{\V}{\olV}=(-1)^n.
$$
So   the dual section of $\V$ is given by $\Omega=(-1)^n\olV\in
\sections{\wedge^{2n}\olL}$.

\begin{prop}(\cite[Prop. 2.22]{GualtieriThesis}, see also
 \cite[III.3.2]{MR1636473})
The line bundle $\module=(\wedge^{2n} \olL\otimes\wedge^{2n}
\TsC)^{\thalf}$ and  $N_{2n}=\olN$ are canonically isomorphic. The
isomorphism can be explicitly described by the following:
\begin{eqnarray}\label{canonicaliso}
 \olN\otimes\olN&\lon& \module\\\nonumber
  \omega_1\otimes \omega_2&\mapsto&\Omega\otimes \Mukai{V\cdot
\omega_1}{\omega_2},\quad~\forall~\omega_1,\omega_2\in \olN.
\end{eqnarray}
The isomorphism (\ref{canonicaliso}) does not depend on the choice
of $u$ and $V$.
\end{prop}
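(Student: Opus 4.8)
The plan is to show that the assignment in \eqref{canonicaliso} is (a) well-defined as a bundle map into $\module$, (b) an isomorphism, and (c) independent of the choices of $u$ and $V$. For (a), I would first observe that $\omega_1,\omega_2\in\olN = N_{2n} = \wedge^{2n}\olL\cdot N$, and since $V\in\sections{\wedge^{2n}\nolL}$ acts on spinors by Clifford multiplication, $V\cdot\omega_1$ lands in $N_0 = N$ (each contraction by a covector lowers the $N$-degree by one, and $\olL$-insertion raises it, so $\wedge^{2n}\nolL$ acting on $N_{2n}$ gives $N_0$). Then $V\cdot\omega_1\in\sections{N}$ and $\omega_2\in\sections{\olN}$, and the Mukai pairing $\Mukai{V\cdot\omega_1}{\omega_2}$ is a section of $\wedge^{2n}\TsC$. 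Tensoring with $\Omega = (-1)^n\olV\in\sections{\wedge^{2n}\olL}$ gives a section of $\wedge^{2n}\olL\otimes\wedge^{2n}\TsC$. The key point is that the map is \emph{symmetric} in $\omega_1,\omega_2$: using \eqref{Mukasymmetry}, \eqref{Mukaequivariant} and the fact that for pure spinors $\Mukai{\cdot}{\cdot}$ pairs $N$ with $\olN$ nondegenerately, together with $\V\cdot\olu = u$, one checks $\Mukai{V\cdot\omega_1}{\omega_2} = \Mukai{V\cdot\omega_2}{\omega_1}$ (up to the sign $(-1)^n$ which is absorbed), so the expression descends from $\olN\otimes\olN$ to $\mathrm{Sym}^2\olN$, and the square-root line bundle $\module$ is precisely $(\olN^{\otimes 2})^{\thalf}$ after identifying $\olN^{\otimes 2}\cong\wedge^{2n}\olL\otimes\wedge^{2n}\TsC$.

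Next, for (b), nondegeneracy: since $\Mukai{u}{\olu}\neq 0$ by the definition of a generalized complex structure, and $V\cdot\olu = u$, the form $\Mukai{V\cdot\olu}{\olu} = \Mukai{u}{\olu}$ is a nowhere-vanishing section of $\wedge^{2n}\TsC$; hence on the generator $\olu\otimes\olu$ of $\olN^{\otimes 2}$ the map produces the nowhere-vanishing section $\Omega\otimes\Mukai{u}{\olu}$ of $\wedge^{2n}\olL\otimes\wedge^{2n}\TsC$, so the induced map of line bundles is an isomorphism. This simultaneously gives the canonical square-root identification $\olN\xrightarrow{\sim}\module$ by taking the ``diagonal'' $\omega\mapsto\omega\otimes\omega$.

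Finally, for (c), independence of choices: replace $u$ by $fu$ for a nowhere-vanishing function $f$. Then $N$ is unchanged as a line bundle but the generator scales; correspondingly $V$ must be replaced by $V' $ with $V'\cdot(\overline{fu}) = fu$, i.e. $V' = (f/\bar f)V$ (since $\olu' = \bar f\,\olu$ and we need $V'\cdot\bar f\olu = f u$, giving $V' = (f/\bar f)V$), and $\Omega' = (-1)^n\olV' = (\bar f/f)\Omega$. Then $V'\cdot\omega_1 = (f/\bar f)V\cdot\omega_1$, so $\Mukai{V'\cdot\omega_1}{\omega_2} = (f/\bar f)\Mukai{V\cdot\omega_1}{\omega_2}$, and $\Omega'\otimes\Mukai{V'\cdot\omega_1}{\omega_2} = (\bar f/f)(f/\bar f)\,\Omega\otimes\Mukai{V\cdot\omega_1}{\omega_2} = \Omega\otimes\Mukai{V\cdot\omega_1}{\omega_2}$, so the map is unchanged. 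One must also check independence within a fixed $u$ of the choice of $V$ with $V\cdot\olu = u$: any two such differ by an element of $\wedge^{2n}\nolL$ annihilating $\olu$, but $\wedge^{2n}\nolL$ is a line bundle so this forces them equal. The main obstacle I anticipate is verifying the symmetry identity $\Mukai{V\cdot\omega_1}{\omega_2}=\Mukai{V\cdot\omega_2}{\omega_1}$ cleanly — this requires carefully tracking signs through the Clifford action of the top-degree element $V$ and through \eqref{Mukasymmetry}–\eqref{Mukaequivariant}, and is where the factor $(-1)^n$ and the precise normalization $\duality{\V}{\olV}=(-1)^n$ genuinely enter; everything else is bookkeeping.
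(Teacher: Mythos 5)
The paper does not actually prove this proposition --- it is quoted from Gualtieri's thesis (Prop.\ 2.22) and Chevalley's book, so there is no in-paper argument to measure yours against. Taken on its own terms, your argument is essentially correct and complete: the degree count $V\cdot\omega_1\in N_0=N$ (which is Lemma \ref{Lem:XcdotWcdot} with $i=j=2n$), the nondegeneracy coming from $\Mukai{V\cdot\olu}{\olu}=\Mukai{u}{\olu}\neq 0$ together with Lemma \ref{Lem:Mukaivanish}, the scaling computation $u\mapsto fu$, $V\mapsto (f/\bar f)V$, $\Omega\mapsto(\bar f/f)\Omega$ under a change of pure spinor, and the uniqueness of $V$ given $u$ (since $\V\cdot\olu=u\neq 0$ on the line $\wedge^{2n}\nolL$) all check out. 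Two small corrections are worth recording. First, the ``main obstacle'' you anticipate --- verifying $\Mukai{V\cdot\omega_1}{\omega_2}=\Mukai{V\cdot\omega_2}{\omega_1}$ --- is vacuous: $\olN$ is a line bundle, so $\olN\otimes\olN=\mathrm{Sym}^2\olN$ automatically and any bilinear expression satisfies $\beta(f\olu,g\olu)=fg\,\beta(\olu,\olu)$; no sign-chasing through \eqref{Mukasymmetry} and \eqref{Mukaequivariant} is needed for well-definedness, only bilinearity. Second, the ``diagonal'' $\omega\mapsto\omega\otimes\omega$ is quadratic, not linear, so it does not by itself produce a bundle isomorphism $\olN\to\module$; the correct reading is that the bilinear map \eqref{canonicaliso} is an isomorphism $\olN\otimes\olN\to\wedge^{2n}\olL\otimes\wedge^{2n}\TsC=\module^{\otimes 2}$ which exhibits $\olN$ as a canonical square root of the right-hand side, whereupon one \emph{takes} $\module:=\olN$. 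This also resolves the apparent mismatch in the statement, where the target of \eqref{canonicaliso} is written as $\module$ although the formula visibly lands in $\module^{\otimes 2}$.
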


From now on we will identify $\olN$ with $(\wedge^{2n}
\olL\otimes\wedge^{2n} \TsC)^{\thalf}$.
  As a consequence of
the $\olL$-module structure on $(\wedge^{2n} \olL\otimes\wedge^{2n}
\TsC)^{\thalf}$, we have two differential operators (see Eq.
(\ref{7}), (\ref{11})):
\begin{align*}
&\bdees:~(\wedge^\bullet \nolL)\otimes\olN\lon
(\wedge^{\bullet+1} \nolL)\otimes\olN ,\\
&\bdel:~(\wedge^\bullet \nolL)\otimes\olN\lon (\wedge^{\bullet-1}
\nolL)\otimes\olN.
\end{align*}

It is also shown in \cite{GualtieriThesis} that
 $ (\wedge^k\olL
)\otimes N \cong N_k $ and $ (\wedge^k\nolL )\otimes \olN \cong
\olN_k$ respectively by the following two isomorphisms:
$$
I:~ (\wedge^k\olL )\otimes N \lon N_k,\quad W\otimes \pu\lon W\cdot
\pu,\quad\forall W\in \wedge^k\olL, \pu\in N,
$$
$$\olI:~(\wedge^k\nolL )\otimes \olN \lon
\olN_k,\quad X\otimes \olpu\lon X\cdot \olpu,\quad\forall X\in
\wedge^k\nolL, \pu\in N.
$$

Our main theorem is
\begin{thm}\label{Thm:Main}The following two diagrams are commutative.
\begin{equation}\label{Digram1}
\xymatrix{ {(\wedge^k\nolL )\otimes \olN} \ar[r]^{ \bdees } \ar[d]_{I} & {(\wedge^{k+1}\nolL )\otimes \olN} \ar[d]^{I} \\
\olN_k \ar[r]_{\pd} & \olN_{k+1}, }
\end{equation}
\begin{equation}\label{Digram2}
\xymatrix{ {(\wedge^k\nolL )\otimes \olN} \ar[r]^{ \bdel } \ar[d]_{\olI} & {(\wedge^{k-1}\nolL )\otimes \olN} \ar[d]^{
\olI} \\
\olN_k \ar[r]_{\barpd} & \olN_{k-1}, }
\end{equation}
\end{thm}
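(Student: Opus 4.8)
The plan is to identify, on both sides of the diagrams, the "generating operator" whose commutators with Clifford-module elements produce the bracket and anchor data, and then exploit the uniqueness of such operators. Concretely, the twisted de Rham differential $\dH$ is a Dirac generating operator for the twisted Courant algebroid $(\TC\oplus\TsC,\dbH{}{})$ in the sense of Definition~\ref{Def:DiracGTR}; on the other hand, by Theorem~\ref{Thm:A} applied to the Lie bialgebroid $(\nolL,\olL)$, the operator $\bdirac=\bdees+\bdel$ acting on $\sections{\wedge\nolL\otimes\module}$ is also a Dirac generating operator, for the double Courant algebroid $\nolL\oplus\olL$. Since $\nolL\oplus\olL=\TC\oplus\TsC$ as Courant algebroids (this is how $\JJ$ induces the bialgebroid), and since the isomorphism $I:\wedge^\bullet\nolL\otimes N\to N_\bullet$ of the preceding proposition identifies the two spinor bundles as Clifford modules (with $\olN\cong\module$ via \eqref{canonicaliso}), the two generating operators must agree up to a function and a normalization; the heart of the argument is to pin down that they agree \emph{on the nose} under $I$.

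First I would set up the Clifford-module identification carefully: unwind how $\clifford{\TC\oplus\TsC}$ acts on $\GCCM=\oplus\wedge^i\TsC$ versus how $\clifford{\nolL\oplus\olL}$ acts on $\wedge^\bullet\nolL$ (via $X\cdot=X\wedge$ for $X\in\nolL$ and $\theta\cdot=\ii{\theta}$ for $\theta\in\olL=\nolL^*$), and check that $I$, together with the module isomorphism $\module\cong\olN$, intertwines these two actions — i.e. $I(e\cdot s)=e\cdot I(s)$ for $e\in\sections{\TC\oplus\TsC}=\sections{\nolL\oplus\olL}$. This is essentially built into the construction of the pure spinor line $N$ (elements of $\nolL$ annihilate $u$, elements of $\olL$ act freely), so it reduces to a bookkeeping check, but it must be done with the correct signs coming from the Mukai pairing in \eqref{canonicaliso}. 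Next, I would verify that $I$ conjugates $\bdirac$ into $\dH$: the cleanest route is to show $I^{-1}\rond\dH\rond I$ satisfies the three axioms of a Dirac generating operator for the double, and that its commutators with functions and with sections reproduce exactly the anchor $\anchor+\anchors$ and Dorfman bracket \eqref{4} of $(\nolL,\olL)$ — by the uniqueness part of the Alekseev–Xu theory (used in \cite{ChenStienon} to prove Theorem~\ref{Thm:A}), a Dirac generating operator compatible with a given Lie bialgebroid structure on $\module$ is unique, hence $I^{-1}\rond\dH\rond I=\bdirac$.

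Once $\dH=I\rond\bdirac\rond I^{-1}=I\rond(\bdees+\bdel)\rond I^{-1}$ is established, the diagrams follow by matching degrees. The operator $\bdees$ raises the $\wedge\nolL$-degree by one, so $I\rond\bdees\rond I^{-1}$ raises the $N_\bullet$-grading by one; but from the definition of $\pd,\barpd$ one has $\dH=\pd+\barpd$ with $\barpd$ raising the $N_\bullet$-grading and $\pd$ lowering it, and the two gradings on $\GCCM$ agree under $I$ because $I$ sends $\wedge^k\nolL\otimes\olN$ to $\olN_k=N_{2n-k}$; comparing graded pieces forces $I\rond\bdees=\barpd\rond I$ after translating between the $\olI$ and $I$ normalizations, and dually $\olI\rond\bdel=\pd\rond\olI$ — wait, I should track the two diagrams separately: Diagram~\eqref{Digram1} uses $I$ and claims $I\rond\bdees=\pd\rond I$, Diagram~\eqref{Digram2} uses $\olI$ and claims $\olI\rond\bdel=\barpd\rond\olI$. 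I would fix the convention (note $I$ is built from $N$, $\olI$ from $\olN$, and they differ by complex conjugation together with the $(-1)^n$ twist relating $\V$ and $\Omega$) and then read off each identity from the degree-$\pm1$ component of $\dH=\pd+\barpd$. The main obstacle I anticipate is exactly this sign/normalization bookkeeping — reconciling the factor in the Mukai-pairing isomorphism \eqref{canonicaliso}, the duality \eqref{Eqt:dualityLolL} with its factor $2$, and the square-root module $\module^{1/2}$ — rather than any conceptual difficulty; the conceptual content is entirely carried by the uniqueness of Dirac generating operators.
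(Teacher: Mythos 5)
Your overall architecture---identify $\dH$ with the generating operator $\bdirac=\bdees+\bdel$ of the Lie bialgebroid $(\nolL,\olL)$ under the spinor-module isomorphisms, then split by degree to read off the two diagrams---is reasonable, and the degree bookkeeping at the end is correct. The gap is the step you lean on to make the identification: ``a Dirac generating operator compatible with a given Lie bialgebroid structure on $\module$ is unique'' is not available in the strength you need, and that is exactly where the content of the theorem lives. Two Dirac generating operators on the same spinor module inducing the same anchor and the same Dorfman bracket need not coincide: their difference $O$ satisfies $\lb{O}{f}=0$ for all $f\in\cinf{M,\CC}$ and $\lb{\lb{O}{z_1}}{z_2}=0$ for all sections $z_1,z_2$, and irreducibility of the spinor module then forces $O$ to be the Clifford action of some section $\xi\in\sections{E}$---but not $O=0$. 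Generating operators for a fixed Courant structure form an affine space, and this ambiguity is precisely why the zeroth-order terms $\thalf X_0\wedge\cdot$ and $\thalf\ii{\xi_0}$ (the modular cocycles) appear in the Alekseev--Xu formulas \eqref{19}--\eqref{20}. So after your verification you only know $I^{-1}\rond\dH\rond I=\bdirac+\xi$ for some $\xi\in\sections{\nolL\oplus\olL}$, and showing $\xi=0$ is equivalent to showing that the $\olL$-module structure transported to $\olN$ from $\module$ via \eqref{canonicaliso} agrees on the nose with $W\mapsto W\cdot\pd\olu$, i.e.\ Proposition \ref{Prop:TwoMudlestrEquivalent}. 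That is not sign bookkeeping; it is the theorem.

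The paper supplies exactly this missing piece by an explicit computation: Proposition \ref{Thm:modularGC} shows that the modular cocycles of $\nolL$ and $\olL$ with respect to $\V$, $\Omega=(-1)^n\olV$ and the volume form $s=\Mukai{u}{\olu}$ are $2e$ and $2\ole$, using $\dH u=e\cdot u$ (Lemma \ref{Lem:Existenceofe}), the Lie-derivative identities \eqref{Eqt:LieDeroverlineV}--\eqref{Eqt:LieDeraoverlineu}, and the Mukai-pairing properties. Substituting into \eqref{19}--\eqref{20} gives $\bdees(X\otimes\olu)=(d_{\olL}X+\ole\wedge X)\otimes\olu$ and $\bdel(X\otimes\olu)=(-\del X+\ii{e}X)\otimes\olu$, which are then matched against \eqref{Eqt:pdXbaru} and a direct computation of $\barpd(X\cdot\olu)$. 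If you want to keep your uniqueness-flavored route, you must still pin down $\xi$, e.g.\ by evaluating both operators on the extreme components $\olu$ and $u$; but that evaluation is precisely the statement $X_0=2\ole$, $\xi_0=2e$, so the modular-cocycle computation cannot actually be bypassed.
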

The proof will be deferred to Section \ref{Sec:ProofofMain}.

In \cite{math/0703298}, Gualtieri constructed an
   $\nolL$-module structure on $N$ and an $\olL$-module structure on
   $\olN$, respectively by
\begin{eqnarray}\label{Eqt:LieDerXu}
\Rep_{X}\pu &\defbe& X\cdot \dH \pu=X\cdot\barpd \pu,
\\\label{Eqt:LieDerWolu} \Rep_{W}\olpu &\defbe& W\cdot \dH
\olpu=W\cdot\pd \olpu,\quad
\end{eqnarray}
$\forall \pu\in
\sections{N},\  X\in
\sections{\nolL},\  W\in
\sections{\olL}$.

As a special situation of $k=0$ in Diagram (\ref{Digram1}), we have
\begin{prop}\label{Prop:TwoMudlestrEquivalent}The above $\olL$-module structure
defined by Eq. (\ref{Eqt:LieDerWolu}) coincides with the
$\olL$-module structure defined by Eq. (\ref{6}), under the
isomorphism (\ref{canonicaliso}).
\end{prop}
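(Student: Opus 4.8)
The plan is to deduce Proposition~\ref{Prop:TwoMudlestrEquivalent} as the $k=0$ case of the first diagram in Theorem~\ref{Thm:Main}, but since the Theorem's proof is deferred, I would instead give a self-contained argument that unwinds both module structures on $\olN$ and matches them directly. Recall that under the identification $\olN \cong \module = (\wedge^{2n}\olL\otimes\wedge^{2n}\TsC)^{\thalf}$ of the preceding Proposition, the operator $\bdees$ at level $k=0$ is precisely the $\olL$-connection (Evens--Lu--Weinstein representation) defined by Eq.~\eqref{6}; so what must be shown is that for every $W\in\sections{\olL}$ and every $\olpu\in\sections{\olN}$, the Evens--Lu--Weinstein action $W\cdot\olpu$ agrees with $\Rep_W\olpu = W\cdot\barpd\olpu = W\cdot\dH\olpu$ (note $\dH\olpu = \pd\olpu$ since $\olpu\in\sections{\olN_{2n}}$ and $\barpd$ raises the $N$-degree, which is already maximal, so the $\barpd$-component vanishes — I should be careful here and write $\Rep_W\olpu = W\cdot\pd\olpu = W\cdot\dH\olpu$ as in Eq.~\eqref{Eqt:LieDerWolu}).

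First I would fix a nowhere-vanishing local $u\in\sections{N}$, take $\V\in\sections{\wedge^{2n}\nolL}$ with $\V\cdot\olu = u$, and work with the explicit isomorphism \eqref{canonicaliso}, $\omega_1\otimes\omega_2\mapsto\Omega\otimes\Mukai{V\cdot\omega_1}{\omega_2}$ with $\Omega = (-1)^n\olV$. The strategy is to transport both actions to the trivializing section $\olu\otimes\olu \mapsto \Omega\otimes\Mukai{V\cdot\olu}{\olu} = \Omega\otimes\Mukai{u}{\olu}$, and to compute the action of a fixed $W\in\sections{\olL}$ on this section in two ways. For the Evens--Lu--Weinstein side one expands \eqref{6} using the $\olL$-Lie-algebroid structure on $\nolL^* = \olL$ together with the Lie derivative $\ld{\anchors(W)}$ on $\wedge^{2n}\TsC$; for the Gualtieri side one uses $\Rep_W\olpu = W\cdot\dH\olpu$ and the derivation property of $\dH$ together with the fact (from the definition of a twisted Dirac structure) that $\dH$ is compatible with the Clifford action of $\olL$ on $\olN$, i.e. one must differentiate the relation $\V\cdot\olu = u$ and $\olV\cdot u = \olu$ through $\dH$.

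The key computational bridge is the behaviour of the Mukai pairing under $\dH$: the identity $\dee_H\Mukai{\chi}{\omega} = \Mukai{\dH\chi}{\omega} + (-1)^{|\chi|}\Mukai{\chi}{\dH\omega}$ (valid because $\dee_H$ is a degree-one derivation of the wedge product and $H$ is closed and of odd degree), combined with the equivariance \eqref{Mukaequivariant} for Clifford multiplication by isotropic elements and its generalization to multiplication by elements of $\wedge^{2n}\nolL$. Applying this to $\Mukai{V\cdot\olu}{\olu}$ and carefully tracking how $\dH V$, $\dH\olu$ interact — using that $\dH u = e\cdot u$ for some $e\in\sections{T\oplus\Ts}$ and hence $\dH\olu = \ole\cdot\olu$ — should reproduce exactly the two terms in \eqref{6}: the "$\bas{W}{\alpha_i}$"-type terms come from the internal differentiation of the $\wedge^{2n}\nolL$-factor $V$ (equivalently $\Omega = (-1)^n\olV$), and the "$\ld{\anchors(W)}\mu$"-term comes from the Lie-derivative action on the $\wedge^{2n}\TsC$-factor inside the Mukai pairing.

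The main obstacle I anticipate is precisely this bookkeeping of signs and the identification of the induced $\olL$-action on $\wedge^{2n}\nolL$ (and on $\wedge^{2n}\TsC$) coming from differentiating $\V\cdot\olu$, matching it term-by-term against the coadjoint-type action in \eqref{6}; in particular one must verify that the representation $\Rep$ of Eq.~\eqref{Eqt:LieDerWolu} is genuinely flat/well-defined as a Lie-algebroid module, which follows from $\barpd^2 = 0$ on $\sections{\olN_\bullet}$ (a consequence of $\dH^2 = 0$ together with the bigrading), and then to check that the Leibniz rule over $\cinf{M,\CC}$ matches — this reduces the problem to checking equality on the single generator $\olu$ up to the $\cinf{M,\CC}$-linearity already guaranteed on both sides. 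Once the generator case is verified, $\cinf{M,\CC}$-linearity of the difference (both actions being Lie-algebroid connections on the same line bundle, their difference is $\cinf{M,\CC}$-linear in $W$ and tensorial) closes the argument. Alternatively, and more cleanly, one may simply invoke Theorem~\ref{Thm:Main} once proved and read off the $k=0$ corner of Diagram~\eqref{Digram1}, in which case this Proposition needs no independent proof at all; I would present it this way and remark that the diagram chase specializes immediately.
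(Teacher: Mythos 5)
Your plan follows essentially the same route as the paper: reduce both module structures to the trivializing section $\olu$ (legitimate since both are $\olL$-connections on a line bundle), compute the Evens--Lu--Weinstein action of $W$ there through the Mukai pairing together with $\dH u = e\cdot u$, and match it against $\Rep_W\olu = W\cdot\pd\olu = W\cdot\ole\cdot\olu = \duality{\ole}{W}\,\olu$. The computation you defer (``should reproduce exactly the two terms in Eq.~\eqref{6}'') is precisely the content of the paper's Proposition~\ref{Thm:modularGC} --- that the modular cocycle of $\olL$ with respect to $\Omega$ and $s=\Mukai{u}{\olu}$ is $2\ole$, whence the induced action on the square root $\module\cong\olN$ is $\LieDer_W\olu=\duality{\ole}{W}\olu$ --- after which the proof is a two-line comparison; your alternative of reading off the $k=0$ corner of Diagram~\eqref{Digram1} is likewise the remark the paper itself makes when stating the Proposition.
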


\section{Modular cocycles of Lie algebroids
}\label{subsec:ModularLiealgebroids}

 In this section we
establish a list of important identities valid in any   Lie
bialgebroid  $(A,A^*)$ and generalized complex structure, which are
subsequently used in Section \ref{Sec:ProofofMain} to prove the
statements of Section \ref{Sec:mainthm}.

 We continue the
assumptions in Section \ref{subsec:CliffordandDiracAAs}: let
$(A,\ba{\cdot}{\cdot},\anchor)$ and
$(A^*,\bas{\cdot}{\cdot},\anchors)$ be a pair of rank-$r$ Lie
algebroids over dimension-$m$ base manifold $M$.

Assume there exists a volume form $s\in\sections{\wedge^m {\TsCM}}$
and a nowhere vanishing section $\Omega\in\sections{\wedge^r A^*}$
so that $\module$ is the trivial line bundle over $M$. And let
$V\in\sections{\wedge^r A}$ be the section dual to $\Omega$:
$\duality{\Omega}{V}=1$. These induce two bundle isomorphisms:
\begin{gather}
\Omega\diese:\wedge^k A\to\wedge^{r-k}A^*:X\mapsto \ii{X}\Omega ,
\label{g10} \\
V\diese:\wedge^k A^*\to\wedge^{r-k}A:\xi\mapsto \ii{\xi}V
,\label{g11}
\end{gather}
which are essentially inverse to each other:

\begin{gather}\label{diverse}
(V^\sharp\circ\Omega^\sharp)(X)=(-1)^{k(r-1)}X,\qquad\forall X\in
\wedge^k A; \\
\label{residents} (\Omega^\sharp\circ V^\sharp)(\varphi)
=(-1)^{k(r-1)}\varphi,\qquad\forall\varphi\in \wedge^k\As.
\end{gather}

Consider the operator $\del$ dual to $\dee$ with respect to
$V\diese$:
\begin{equation}\label{12} \xymatrix{ \sections{\wedge^k A^*}
\ar[r]^{V\diese} \ar[d]_{-(-1)^k\dee} &
\sections{\wedge^{r-k} A} \ar[d]^{\del} \\
\sections{\wedge^{k+1} A^*} \ar[r]_{V\diese} &
\sections{\wedge^{r-k-1} A}, } \end{equation}
or \begin{equation}\label{13} -V\diese\dee\alpha=(-1)^k\del
V\diese\alpha, \quad \forall\alpha\in\sections{\wedge^k A^*},
\end{equation}
which, due to \eqref{diverse} and \eqref{residents}, can be
rewritten as
\begin{equation}\label{13bis}
\Omega\diese \del \beta=(-1)^l \dA \Omega\diese \beta,\quad\forall
\beta\in \sections{\wedge^l A}.
\end{equation}

The operator $\del$ is a Batalin-Vilkovisky operator for the Lie
algebroid $A$ \cites{MR1764439,MR837203,MR1675117,MR2182214}.
Similarly, we  have the operator $\dels$ dual to $\dees$:
\begin{equation*}\label{14} \xymatrix{ \sections{\wedge^{r-k} A}
\ar[d]_{ (-1)^k\dees} &
\sections{\wedge^{ k} A^*} \ar[l]_{V\diese} \ar[d]^{\dels} \\
\sections{\wedge^{r-k+1} A} &
\sections{\wedge^{ k-1} A^*} \ar[l]^{V\diese}, } \end{equation*}
or \begin{equation*}\label{15}  \dees V\diese\alpha=(-1)^k
V\diese\dels \alpha, \quad \forall\alpha\in\sections{\wedge^{ k}
A^*}.
\end{equation*}

According to \cite{MR1726784}, there exists a unique
$X_0\in\sections{A}$ such that
\begin{equation}\label{17} \ld{\theta}(\Omega\otimes s)
=(\ld{\theta}\Omega)\otimes s+\Omega\otimes(\ld{\anchors(\theta)} s)
=\duality{X_0}{\theta}\Omega\otimes s, \quad\forall
\theta\in\sections{A^*} .
\end{equation}
Similarly, there exists a unique $\xi_0\in\sections{A^*}$ such that
\begin{equation}\label{18} \ld{X}(s\otimes V)=(\ld{\anchor(X)}s)\otimes V
+s \otimes(\ld{X}V)=\duality{\xi_0}{X}s\otimes V, \quad\forall
X\in\sections{A} .\end{equation} These sections $X_0$ and $\xi_0$
are called \emph{modular cocycles} and their cohomology classes are
called modular classes \cite{MR1726784}.

A simple computation yields the following formulas, which are given
in \cite{AlekseevXu}.
\begin{prop}With the above notations, the differential operators defined by Eq. (\ref{7}) and
(\ref{11}) are given respectively:
\begin{equation}\label{19}
\bdees(X\otimes l)=(\dees X+\thalf X_0\wedge X)\otimes l
\end{equation} and
\begin{equation}\label{20} \bdel(X\otimes
l)=(-\del X+\thalf \ii{\xi_0}X)\otimes l ,
\end{equation}
for all $X\in\sections{\wedge A}$ and $l\in\sections{\module}$.
\end{prop}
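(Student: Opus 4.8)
The plan is to verify the two formulas \eqref{19} and \eqref{20} directly by unravelling the definitions of $\bdees$ and $\bdel$ from Equations \eqref{6}--\eqref{11}, under the trivializations provided by the volume form $s$ and the dual sections $\Omega$, $V$. Since $\module$ is trivialized, a section of $\wedge^k A\otimes\module$ is written $X\otimes l$ with $l$ a fixed local generator of $\sections{\module}$, and the whole problem reduces to computing how $\bdees$ and $\bdel$ act on $X$ alone, absorbing the ``half-density'' contributions into the modular cocycles $X_0$ and $\xi_0$.

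First I would treat $\bdees$. By definition, $\bdees$ is the differential of the $A^*$-module $\wedge^\bullet A\otimes\module$ built from the Lie algebroid structure on $A$ acting on $\wedge^\bullet A$ by $\dees$ and on $\module$ by the Evens--Lu--Weinstein action. Concretely, writing $l$ as the local square root of $\Omega\otimes s$, the action of $A$ on $l$ is governed by \eqref{18}: $\ld{X}l=\thalf\duality{\xi_0}{X}\,l$. Wait — more precisely, $\bdees$ comes from the $A$-structure (it raises degree in $\wedge A$), so the relevant cocycle is $X_0$ from \eqref{17}, which measures how $A^*$ acts on $\Omega\otimes s$; dualizing, $X_0$ enters the $A$-differential on the half-density bundle. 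Expanding $\bdees(X\otimes l)$ by the Leibniz rule gives $(\dees X)\otimes l + (\text{correction from }l)$, and the correction is exactly $\thalf X_0\wedge X\otimes l$ because the half-density's derivative contributes half of the full density's modular term, which by \eqref{17} is wedging with $X_0$. This yields \eqref{19}.

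Next I would treat $\bdel$, which is more delicate because its very definition \eqref{8}--\eqref{m1} routes through the chain of isomorphisms \eqref{9}, \eqref{10}: one first builds the $A$-module differential on $\wedge^\bullet A^*\otimes\explicit$, then transports it along $\Omega\diese$ and $V\diese$ to $\wedge^{r-\bullet}A\otimes\module$. The key computational input is the commuting square \eqref{12}/\eqref{13} defining the Batalin--Vilkovisky operator $\del$ as the $V\diese$-conjugate of $-\dee$, together with \eqref{13bis}. Running the same Leibniz-rule analysis on the $A$-module $\explicit$-bundle, the differential $\dee$ on $\wedge^\bullet A^*$ picks up a modular correction involving $\xi_0$ from \eqref{18} (the cocycle for how $A$ acts on $s\otimes V$); conjugating $-\dee$ (with the appropriate sign $(-1)^k$ from \eqref{13}) turns $-\dee$ into $-\del$ and turns wedging with $\xi_0$ into contraction $\ii{\xi_0}$, each halved for the square-root bundle. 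This produces \eqref{20}.

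The main obstacle I expect is bookkeeping the signs and the half-powers correctly through the isomorphism chain of \eqref{m1}: one must carefully track how $(-1)^{k(r-1)}$ factors from \eqref{diverse}--\eqref{residents}, the $(-1)^k$ from \eqref{13}, and the degree shift $k\mapsto r-k$ interact, and verify that the $\thalf$ in front of $X_0$ and $\xi_0$ genuinely arises from the square root rather than from a miscount. A clean way to control this is to first do the computation for the \emph{full} density bundle $\wedge^r A^*\otimes\wedge^m\TsCM$ (where the action \eqref{6}/\eqref{17}/\eqref{18} is explicit and sign-free), obtain the analogues of \eqref{19}--\eqref{20} with $X_0$, $\xi_0$ appearing with coefficient $1$, and then invoke the general fact that the differential on a square-root module is obtained from that on the square by replacing the modular cocycle with half of it. The remaining work is then purely the conjugation identity \eqref{13bis}, which is already recorded in the excerpt, so the proof is indeed a ``simple computation'' once the half-density principle is in place.
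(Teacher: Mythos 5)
Your proposal is correct and is essentially the computation the paper has in mind: the paper offers no proof beyond ``A simple computation yields the following formulas, which are given in [Alekseev--Xu]'', and your Leibniz-rule argument (the square-root module contributing half the modular cocycle of the full density bundle, via \eqref{17} and \eqref{18}), followed by conjugation through $V\diese$ using \eqref{13}--\eqref{13bis} to turn $\dee$ into $-\del$ and $\xi_0\wedge(\cdot)$ into $\ii{\xi_0}$, is exactly that computation. The only slip is terminological: $\bdees$ is the differential of the Lie algebroid $A^*$ (whose cochain complex with coefficients in the $A^*$-module $\module$ is $\sections{\wedge^\bullet A\otimes\module}$), not of $A$; since you nevertheless source the correction term from the $A^*$-action \eqref{17}, i.e.\ from $X_0$, halved for the square root, the substance is unaffected.
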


Hence the operator $\bdirac$ in Theorem \ref{Thm:A} reads
\[ \bdirac=\bdees+\bdel=\dees-\del+\thalf(X_0\wedge\cdot~+~\ii{\xi_0})
.\] This construction of Dirac generating operators using modular
cocylces appeared in \cites{AlekseevXu,ChenStienon}.

Now we consider a twisted generalized complex structure $\JJ$ on a
$2n$-dimensional manifold $M$ and let $\nolL$ and $\olL$ be
respectively the $+i$ and $-i$-eigenspace of $\JJ$.   Again we
assume that $u$ is a nowhere vanishing local section of $N$, the
pure spinor bundle of $\nolL$.

We need the following basic fact:
\begin{lem}\label{Lem:Existenceofe}\cite{math/0703298}
There exists some $e=x+\eta\in
\sections{\olL }$ such that
\begin{equation}\label{dHueu}
\dH u~=~\barpd u~=~du+H\wedge u~=~e\cdot
u~=~\inserts_{x}u+\eta\wedge u,
\end{equation}
\begin{equation}\label{dHolu}
\dH \olu~=~\pd \olu~=~d\olu+H\wedge \olu~=~\ole\cdot
\olu~=~\inserts_{\overline{x}}\olu+\overline{\eta}\wedge \olu.
\end{equation}

\end{lem}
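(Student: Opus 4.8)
The plan is to construct the section $e$ directly from characterisation~(iii) of $\JJ$, read off the chain of equalities in \eqref{dHueu}, and then deduce \eqref{dHolu} from it by complex conjugation; everything happens on the open set where $u$ is defined, and $\dH u$ means $du+H\wedge u$ by definition. By condition~(iii) there is a section $e'$ of $\TC\oplus\TsC$ with $\dH u=e'\cdot u$. It is determined only up to the annihilator $\set{X\in\TC\oplus\TsC~|~X\cdot u=0}$, which by~(iii) is exactly $\nolL$. Since $\nolL$ and $\olL$ are maximal isotropic for the split-signature pairing \eqref{naturalpairing} on the rank-$4n$ bundle $\TC\oplus\TsC$, they each have rank $2n$, and $\nolL\cap\olL=\set{0}$ by transversality of $\JJ$; hence $\TC\oplus\TsC=\nolL\oplus\olL$. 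Replacing $e'$ by its $\olL$-component yields a section $e\in\sections{\olL}$ with $e\cdot u=\dH u$, and, writing $e=x+\eta$ with $x\in\sections{\TC}$ and $\eta\in\sections{\TsC}$, the Clifford action on $\GCCM$ reads $e\cdot u=\inserts_x u+\eta\wedge u$.

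To complete \eqref{dHueu} it remains to identify $\dH u$ with $\barpd u$. Here one observes that $e\in\sections{\olL}$ and $u\in\sections{N_0}$ force $\dH u=e\cdot u\in\sections{N_1}$, because $N_1=\olL\cdot N$. Since $\GCCM=\bigoplus_{j=0}^{2n}N_j$ and $\barpd u=pr_{N_1}(\dH u)$ by definition, the fact that $\dH u$ already lies in $N_1$ gives $\barpd u=\dH u$; likewise $\pd u=0$, there being no summand of negative degree. This proves \eqref{dHueu}.

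For \eqref{dHolu} I would pass to complex conjugates. The operator $\dH$ commutes with conjugation on $\GCCM$ ($d$ does and $H$ is real), and $\overline{(x+\eta)\cdot u}=(\bar x+\bar\eta)\cdot\olu$ by $\CC$-bilinearity of the Clifford action, so conjugating \eqref{dHueu} gives $\dH\olu=\ole\cdot\olu=\inserts_{\bar x}\olu+\bar\eta\wedge\olu=d\olu+H\wedge\olu$. Moreover $\ole=\bar x+\bar\eta$ lies in $\overline{\olL}=\nolL$ ($\olL$ and $\nolL$ being the conjugate eigenbundles of the real operator $\JJ$) and $\olu$ generates $\olN=N_{2n}$, so $\dH\olu\in\nolL\cdot\olN=\olN_1=N_{2n-1}$; exactly as in the previous paragraph this forces $\pd\olu=\dH\olu$ and $\barpd\olu=0$, completing \eqref{dHolu}. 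The only step that is more than bookkeeping with the grading $\GCCM=\bigoplus_jN_j$ is the normalisation of the generating section into $\olL$ carried out in the first paragraph, and this is an immediate consequence of the transversality $\TC\oplus\TsC=\nolL\oplus\olL$.
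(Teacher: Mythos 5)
The paper itself offers no proof of this lemma---it is imported from \cite{math/0703298} as a ``basic fact''---so there is nothing internal to compare against; judged on its own, your argument is correct and complete. The three ingredients you use are exactly the right ones: characterization (iii) supplies some $e'$ with $\dH u=e'\cdot u$ and identifies the annihilator of $u$ as $\nolL$, the splitting $\TC\oplus\TsC=\nolL\oplus\olL$ (from $\nolL\cap\olL=\set{0}$ and equal ranks) lets you discard the $\nolL$-component to land $e$ in $\sections{\olL}$, and then $\dH u=e\cdot u\in\sections{N_1}$ makes $\barpd u=pr_{N_1}(\dH u)=\dH u$ a tautology from the paper's definition of $\barpd$, without even invoking the full decomposition theorem $\dH=\pd+\barpd$. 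The conjugation step for \eqref{dHolu} is likewise sound, using that $d$ and $H$ are real, that the Clifford action conjugates termwise, and that $\overline{\olL}=\nolL$ because $\JJ$ is a real operator. The only cosmetic point worth adding is that condition (iii) is stated for a distinguished generator of $N$, whereas the lemma is applied to an arbitrary nowhere-vanishing local section $u$; rescaling the generator by $f$ changes the generating section by $d\log f\in\sections{\TsC}$, so the hypothesis $\dH u\in(\TC\oplus\TsC)\cdot u$ persists and your argument applies verbatim.
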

The main result in this section is the following.
\begin{prop}\label{Thm:modularGC}Let $\V\in
\sections{\wedge^{2n}\nolL}$ such that  $ \V\cdot \olu=u$.
Then the modular cocycle of $\nolL$ with respect to the top form
$\V$ and the volume form $s=\Mukai{u}{\olu}$ is $2e$, where $e\in
\sections{\olL }$ is given by Lemma \ref{Lem:Existenceofe}.

Similarly, the modular cocycle of $\olL$ with respect to
$\Omega=(-1)^n\olV\in
\sections{\wedge^{2n}\olL}$ and $s$ is $2\ole$.
\end{prop}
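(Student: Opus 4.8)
The plan is to compute the modular cocycle of $\nolL$ directly from its defining equation \eqref{17}, specialized to $A=\nolL$, $A^*=\olL$, $r=2n$, $m=2n$, with the top form $\V\in\sections{\wedge^{2n}\nolL}$ and the volume form $s=\Mukai{u}{\olu}$. By definition the modular cocycle is the unique section $X_0\in\sections{\nolL}$ (really a section of $\olL$ here, since $\nolL^*=\olL$) such that $\ld{\theta}(\Omega_{\nolL}\otimes s)=\duality{X_0}{\theta}\,\Omega_{\nolL}\otimes s$ for all $\theta\in\sections{\olL}$, where $\Omega_{\nolL}$ is the section of $\wedge^{2n}\olL$ dual to $\V$, i.e. $\Omega_{\nolL}=(-1)^n\olV$. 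So what I must show is that $\ld{\theta}\big((-1)^n\olV\otimes\Mukai{u}{\olu}\big)=2\duality{e}{\theta}\,(-1)^n\olV\otimes\Mukai{u}{\olu}$ for every $\theta\in\sections{\olL}$, with $e$ as in Lemma \ref{Lem:Existenceofe}. Since $(-1)^n$ is constant it drops out, and the pairing $\duality{\cdot}{\cdot}$ between $\nolL$ and $\olL$ is $2\ip{\cdot}{\cdot}$ by \eqref{Eqt:dualityLolL}, so the target coefficient $2\duality{e}{\theta}$ equals $4\ip{e}{\theta}$; I will keep track of these factors carefully as that is where sign/normalization errors creep in.

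First I would translate everything into the spinor picture, which is the natural home of $u$, $\olu$, $\V$, $\Mukai{\cdot}{\cdot}$. The key translation tools are: (a) the Clifford action of $\wedge^{2n}\olL$ on spinors, under which $\olV\cdot u=\olu$ and $\V\cdot\olu=u$; (b) the $\olL$-module structure on the spinor line $\olN$, which by Proposition \ref{Prop:TwoMudlestrEquivalent} (the $k=0$ case of Diagram \eqref{Digram1}) is given by $\Rep_W\olu=W\cdot\pd\olu=W\cdot\dH\olu$; and (c) the behaviour of the Mukai pairing, in particular \eqref{Mukaequivariant}, which says $\Mukai{\phi\wedge\chi}{\omega}+\Mukai{\chi}{\phi\wedge\omega}=0$ for $\phi$ a $2$-form, together with the more general ``infinitesimal invariance'' statement that for $e\in\sections{\TC\oplus\TsC}$ one has $\rho(e)\Mukai{\chi}{\omega}=\Mukai{e\cdot\chi}{\omega}+(-1)^{?}\Mukai{\chi}{e\cdot\omega}$ up to an exact term, a standard consequence of the compatibility of $\dH$ with $\Mukai{\cdot}{\cdot}$. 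I would then differentiate $s=\Mukai{u}{\olu}$ along $\anchors(\theta)=\rho(\theta)$ using this invariance, feeding in $\dH u=e\cdot u$ and $\dH\olu=\ole\cdot\olu$ from Lemma \ref{Lem:Existenceofe}, to express $\ld{\rho(\theta)}\Mukai{u}{\olu}$ in terms of $\ip{e}{\theta}$, $\ip{\ole}{\theta}$ and the pure spinors; since $\theta\in\sections{\olL}$ pairs to zero with $\olL$, only the $\ip{e}{\theta}$ piece survives.

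Next I would compute $\ld{\theta}\olV$, i.e. the action of $\theta\in\sections{\olL}$ on the top exterior power $\wedge^{2n}\olL$ via the Lie algebroid $\olL$ acting on itself by the bracket $\bas{\cdot}{\cdot}$. The cleanest route is to again pass to spinors: $\olV$ is characterized by $\olV\cdot u=\olu$, so I differentiate this relation. Applying $\Rep_\theta$ (the $\olL$-Lie derivative on the spinor side) to both sides and using that $\Rep_\theta$ is compatible with Clifford multiplication — $\Rep_\theta(W\cdot\psi)=(\ld{\theta}W)\cdot\psi+(-1)^{|W|}W\cdot\Rep_\theta\psi$ for $W\in\wedge^\bullet\olL$ — lets me read off $(\ld{\theta}\olV)\cdot u$ in terms of $\Rep_\theta\olu$ and $\Rep_\theta u$; but $u$ is a pure spinor for $\nolL$ and $\theta\in\olL=\nolL^*$ acts on it too, and here I expect to have to be careful about whether $u$ is even a module section for $\olL$ (it is not canonically, so I should instead work entirely with $\olu$, or differentiate $\V\cdot\olu=u$ instead). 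Combining the two computations — $\ld{\theta}\olV$ and $\ld{\rho(\theta)}s$ — into $\ld{\theta}(\olV\otimes s)$ and collecting terms should produce exactly the coefficient $2\duality{e}{\theta}$, proving the first assertion; the second assertion, for $\olL$ with top form $\Omega=(-1)^n\olV$, follows by the identical argument with the roles of $\nolL$ and $\olL$, hence of $u$ and $\olu$, $e$ and $\ole$, interchanged, using \eqref{dHolu} in place of \eqref{dHueu}. The main obstacle I anticipate is bookkeeping: getting the Clifford-module Leibniz signs, the $(-1)^n$ from $\Omega=(-1)^n\olV$, the factor $2$ in \eqref{Eqt:dualityLolL}, and the degree-dependent signs in the Mukai pairing all to conspire into precisely $2e$ rather than, say, $e$ or $-2e$; I would pin these down by first checking the two classical extreme cases (complex structure: $H=0$, $u$ a holomorphic volume form, $e=0$; symplectic structure: $u=e^{i\omega}$), where the answer is known, to fix all conventions before writing the general computation.
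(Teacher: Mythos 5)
Your overall strategy --- realize everything on the spinor side, feed in Lemma \ref{Lem:Existenceofe}, and control the volume form $s=\Mukai{u}{\olu}$ through the Mukai-pairing identities --- is exactly the paper's. But the setup of the defining equation is wrong, and the target identity you write down is false. The modular cocycle of $\nolL$ is the $\xi_0$-type cocycle of Eq.\ \eqref{18} with $A=\nolL$: it is the section of $\nolL^*=\olL$ determined by $\ld{X}(s\otimes \V)=\duality{\xi_0}{X}\,s\otimes \V$ for $X\in\sections{\nolL}$, i.e.\ one differentiates along sections of $\nolL$, acting on the $\nolL$-module $\wedge^{2n}\TsC\otimes\wedge^{2n}\nolL$. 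You instead use the Eq.\ \eqref{17}-type equation, differentiating $(-1)^n\olV\otimes s$ along $\theta\in\sections{\olL}$; that computes the modular cocycle of $\olL$, which is the \emph{second} claim and equals $2\ole$, not $2e$. Worse, the identity you set out to prove, $\ld{\theta}\big((-1)^n\olV\otimes s\big)=2\duality{e}{\theta}\,(-1)^n\olV\otimes s$ for $\theta\in\sections{\olL}$, has identically vanishing right-hand side: by Eq.\ \eqref{Eqt:dualityLolL}, $\duality{e}{\theta}=2\ip{e}{\theta}=0$ since $e$ and $\theta$ both lie in the isotropic subbundle $\olL$. The same point undercuts your claim that ``only the $\ip{e}{\theta}$ piece survives because $\theta$ pairs to zero with $\olL$'' --- $e$ itself lies in $\olL$ and pairs to zero with $\theta$ as well. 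So the first claim requires differentiating along $X=a+\zeta\in\sections{\nolL}$, where $\duality{e}{X}$ is genuinely nonzero; the second claim then follows by conjugation symmetry (using $\bar s=(-1)^n s$).

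Two further gaps remain even after the directions are fixed. First, you invoke Proposition \ref{Prop:TwoMudlestrEquivalent} to identify the Evens--Lu--Weinstein module structure \eqref{6} on $\olN$ with Gualtieri's $W\mapsto W\cdot\dH\olu$; but that proposition is itself deduced from the second statement of Proposition \ref{Thm:modularGC}, so using it here is circular. What one may legitimately use is the Leibniz-type identity $\barpd(W\cdot u)=(d_\nolL W+e\wedge W)\cdot u$ of Eq.\ \eqref{Eqt:pdbarwu}, which involves only the Lie algebroid differential. Second, and more substantively, the two pieces $\LieDer_{X}\V$ and $\LieDer_{\rho_{\nolL}(X)}s$ are \emph{not} individually multiples of $\duality{e}{X}$, as your outline assumes. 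Writing $\barpd(X\cdot\olu)=f\,\olu$ for an a priori unknown $f\in\cinf{M,\CC}$, one finds $\LieDer_{X}\V=(\duality{e}{X}-f)\V$ from Eq.\ \eqref{Eqt:LieDeroverlineV} and $\LieDer_{\rho_{\nolL}(X)}s=(\duality{e}{X}+f)s$ from Eqs.\ \eqref{Eqt:LieDerau}--\eqref{Eqt:LieDeraoverlineu} together with Eq.\ \eqref{Mukaequivariant} and Lemma \ref{Lem:Mukaivanish}; the unknown $f$ cancels only in the sum, yielding the coefficient $2\duality{e}{X}$. Your plan to evaluate $\ld{\rho(\theta)}\Mukai{u}{\olu}$ on its own ``in terms of $\ip{e}{\theta}$'' will therefore stall on a term you cannot compute, and this cancellation --- the actual engine of the proof --- is absent from your proposal.
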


Before the proof, we need a couple of lemmas. The first one can be
easily verified.
\begin{lem} \label{Lem:XcdotWcdot} For all $W\in
\wedge^j\olL$, $X\in \wedge^i \nolL$,  and  $ i\leq j\leq 2n$, one
has
\begin{equation}\label{Eqt:XWu}
X\cdot W\cdot u=(-1)^{\frac{i(i-1)}{2}}(\ii{X}W)\cdot u\,.
\end{equation}
\end{lem}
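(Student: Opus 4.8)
The plan is to prove the identity by induction on the degree $i$ of $X$, after reducing to decomposable elements. By multilinearity it suffices to treat $X=X_1\wedge\cdots\wedge X_i$ with each $X_k\in\nolL$ and $W=\theta_1\wedge\cdots\wedge\theta_j$ with each $\theta_l\in\olL$. Since both $\nolL$ and $\olL$ are maximal isotropic, distinct factors anticommute in $\clifford{\TC\oplus\TsC}$ and each squares to zero; hence the image of a wedge product under the natural injection $\wedge^\bullet\nolL\hookrightarrow\clifford{\TC\oplus\TsC}$ agrees with the iterated Clifford product, so that $X\cdot$ and $W\cdot$ really denote the iterated Clifford actions $X_1\cdots X_i\cdot$ and $\theta_1\cdots\theta_j\cdot$. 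The two facts that drive everything are the Clifford relation $X_k\cdot\theta_l+\theta_l\cdot X_k=2\ip{X_k}{\theta_l}=\duality{X_k}{\theta_l}$ (using the identification \eqref{Eqt:dualityLolL}) together with the defining property $X_k\cdot u=0$ of the pure spinor $u$ of $\nolL$.

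For the base case $i=1$ (where the prefactor is $(-1)^0=1$ and the hypothesis $i\le j$ reads $j\ge1$) I would move the single vector $X_1$ rightward through the string $\theta_1\cdots\theta_j$. Each transposition $X_1\cdot\theta_l=-\theta_l\cdot X_1+\duality{X_1}{\theta_l}$ splits off a contraction term, and after passing all $j$ factors the leftover term $(-1)^j\,\theta_1\cdots\theta_j\cdot(X_1\cdot u)$ vanishes because $X_1\cdot u=0$. What survives is exactly
\begin{equation*}
X_1\cdot W\cdot u=\sum_{l=1}^{j}(-1)^{l-1}\duality{X_1}{\theta_l}\,\theta_1\cdots\widehat{\theta_l}\cdots\theta_j\cdot u=(\ii{X_1}W)\cdot u,
\end{equation*}
the last equality being just the expansion of $\ii{X_1}(\theta_1\wedge\cdots\wedge\theta_j)$.

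For the inductive step I would write $X=X_1\wedge X'$ with $X'\in\wedge^{i-1}\nolL$, so that $X\cdot=X_1\cdot X'\cdot$ as Clifford actions, apply the inductive hypothesis to $X'\cdot W\cdot u$ (legitimate since $i\le j$ forces $i-1\le j$), and then apply the base case to $X_1\cdot(\ii{X'}W)\cdot u$ (legitimate since $i\le j$ forces the degree $j-(i-1)$ of $\ii{X'}W$ to be at least $1$). This yields
\begin{equation*}
X\cdot W\cdot u=(-1)^{\frac{(i-1)(i-2)}{2}}(\ii{X_1}\ii{X'}W)\cdot u.
\end{equation*}
It then remains to compare $\ii{X_1}\ii{X'}W$ with $\ii{X}W$: since one-vector contractions are odd operators and therefore anticommute, moving $\ii{X_1}$ across the $i-1$ contractions comprising $\ii{X'}$ produces the factor $(-1)^{i-1}$, i.e. $\ii{X_1}\ii{X'}W=(-1)^{i-1}\ii{X}W$. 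Combining the two signs via $\frac{(i-1)(i-2)}{2}+(i-1)=\frac{i(i-1)}{2}$ closes the induction.

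The commutation in the base case is routine; the only genuine bookkeeping is the sign, and the one place that really uses $i\le j$ is precisely the two legitimacy checks in the inductive step (the base case must still apply to $\ii{X'}W$, which forces $j\ge i$). To make the anticommutation count above unambiguous I would fix once and for all the convention $\ii{X_1\wedge\cdots\wedge X_i}=\ii{X_i}\circ\cdots\circ\ii{X_1}$; with this convention the resulting $(-1)^{i(i-1)/2}$ is exactly the signature of reversing $i$ anticommuting factors, which serves as a reassuring consistency check on the whole computation.
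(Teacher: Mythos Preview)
Your proof is correct; the paper does not actually give a proof of this lemma, stating only that it ``can be easily verified.'' Your induction on $i$, driven by the Clifford commutation relation and the annihilation $X_k\cdot u=0$, is precisely the routine verification the authors have in mind, and your explicit check of the contraction convention $\ii{X_1\wedge\cdots\wedge X_i}=\ii{X_i}\circ\cdots\circ\ii{X_1}$ correctly pins down the sign $(-1)^{i(i-1)/2}$.
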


Since $\nolL$ is a Lie algebroid and $ \nolL^*=\olL$, we have the
differential
$$d_\nolL:~
\sections{\wedge^{\bullet}\olL }\lon
\sections{\wedge^{\bullet+1}\olL }.$$
Moreover,  we have the following equality:
\begin{eqnarray}\nonumber
\barpd(W\cdot u)&=&(d_\nolL W)\cdot u+(-1)^k W\cdot \barpd
u\\\nonumber &=&(d_\nolL W)\cdot u+(-1)^k (W\wedge e)\cdot
u\\\label{Eqt:pdbarwu} &=&(d_\nolL W+e\wedge W)\cdot u ,\qquad
\forall W\in
\sections{\wedge^k\olL },
\end{eqnarray}
which encodes  the $\nolL$-module structure on $N$ defined by Eq.
(\ref{Eqt:LieDerXu}).

Similarly, one has
\begin{eqnarray}\nonumber
\pd(X\cdot \olu)&=&(d_{\olL } X)\cdot \olu+(-1)^i X\cdot  \pd
\olu\\\label{Eqt:pdXbaru} &=&(d_{\olL } X + \ole\wedge X )\cdot
\olu,\qquad \forall X\in
\sections{\wedge^i {\nolL}}.
\end{eqnarray}

\begin{lem}For any $X=a+\zeta\in \sections{\nolL}$, we have
\begin{equation}\label{Eqt:LieDeroverlineV}
(\LieDer_{X}\olV)\cdot u=\barpd{(X\cdot \olu)}-\duality{e}{X}\olu,
\end{equation}
\begin{equation}\label{Eqt:LieDerau}
 \LieDer_{a}u=\duality{e}{X} u -(d\zeta+\inserts_a H)\wedge u,
\end{equation}
\begin{equation}\label{Eqt:LieDeraoverlineu}
 \LieDer_{a}\olu=\barpd{(X\cdot \olu)}+(d_{\olL}X )\cdot \olu
-(d\zeta+\inserts_a H)\wedge\olu.
\end{equation}
\end{lem}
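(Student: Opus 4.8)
The plan is to establish the three identities simultaneously by a direct Cartan-calculus computation on spinors, using the fact that $\nolL$ acts on $\GCCM$ by the Clifford action $X\cdot = \ii{a} + \zeta\wedge$ when $X = a+\zeta$. First I would compute $\LieDer_a u$ directly: since $u$ is a section of $\wedge^\bullet\TsC$, the ordinary Cartan formula gives $\LieDer_a u = \ii{a}du + d(\ii{a}u)$. Then I substitute $du = \dH u - H\wedge u = e\cdot u - H\wedge u$ from Lemma \ref{Lem:Existenceofe}, so $\ii{a} du = \ii{a}(\inserts_x u) + \ii{a}(\eta\wedge u) - \ii{a}(H\wedge u)$. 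Using $\duality{e}{X} = 2\ip{e}{X} = \duality{a}{\eta} + \duality{x}{\zeta}$ (the pairing \eqref{Eqt:dualityLolL}) and rearranging the interior-product Leibniz terms, the terms $\ii{a}(\eta\wedge u) = \duality{a}{\eta}u - \eta\wedge\ii{a}u$ combine with $d(\ii{a}u)$; the key is that $\ii{x}u$ and $\eta\wedge u$ together with the $\ii{a}$'s reorganize into $\duality{e}{X}u$ plus a term $-(d\zeta + \ii{a}H)\wedge u$. The appearance of $d\zeta$ rather than just a wedge term is the subtle point: it must come from $d(\ii{a}u) + \zeta$-dependent pieces of $\ii{a}(\eta\wedge u)$ after using that $X\cdot u = 0$ (i.e. $\ii{a}u + \zeta\wedge u = 0$, since $X\in\nolL$), which lets me trade $\ii{a}u = -\zeta\wedge u$ and hence $d(\ii{a}u) = -d\zeta\wedge u + \zeta\wedge du$.

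Next I would derive \eqref{Eqt:LieDeraoverlineu}. Here one cannot use $X\cdot u = 0$ in the same way because $u$ is replaced by $\olu$; instead I write $\LieDer_a\olu = \ii{a}d\olu + d(\ii{a}\olu)$ and substitute $d\olu = \pd\olu - H\wedge\olu = \ole\cdot\olu - H\wedge\olu$ from \eqref{dHolu}. The term $\ii{a}(\ole\cdot\olu)$ expands, and I recognize $X\cdot\olu = \ii{a}\olu + \zeta\wedge\olu$, so $\barpd(X\cdot\olu)$ by \eqref{Eqt:pdXbaru} (with $X$ of degree $1$, $d_{\olL}X\in\sections{\wedge^2\nolL}$) equals $(d_{\olL}X + \ole\wedge X)\cdot\olu$. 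Comparing $\LieDer_a\olu$ with $\barpd(X\cdot\olu)$ term by term, the wedge-by-$\ole$ and $d_{\olL}X$ Clifford-action contributions must cancel against the expansion of $\ole\cdot\olu$ and the $\ii{a}$ Leibniz terms, leaving exactly $-(d_{\olL}X)\cdot\olu$ on one side — hence the $+(d_{\olL}X)\cdot\olu$ correction in the stated formula — plus the residual $-(d\zeta + \ii{a}H)\wedge\olu$, which arises identically to the $u$-case since $d$ and $\ii{a}$ act the same way on $\olu$ as on $u$ as differential forms.

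For \eqref{Eqt:LieDeroverlineV}, I would use that $\V\cdot\olu = u$ determines $\V$ up to the $\olL$-annihilator, and that the Clifford action is a module map: $\LieDer_X$ on $\sections{\wedge^{2n}\nolL}$ is the Lie-algebroid Lie derivative $d_{\olL}\ii{X} + \ii{X}d_{\olL}$, but since $\wedge^{2n}\nolL$ is top degree, $d_{\olL}$ kills it and $\LieDer_X\olV = d_{\olL}(\ii{X}\olV)$. Applying the Clifford action and \eqref{Eqt:XWu} / \eqref{Eqt:pdbarwu}-type identities, $(\LieDer_X\olV)\cdot u$ becomes a $\barpd$ of $(\ii{X}\olV)\cdot u$ up to sign, and one identifies $(\ii{X}\olV)\cdot u$ with $X\cdot\olu$ via $\V\cdot\olu = u$ and Lemma \ref{Lem:XcdotWcdot}; the leftover term $-\duality{e}{X}\olu$ comes from the $(-1)^k W\cdot\barpd u = W\cdot(e\cdot u)$ piece in \eqref{Eqt:pdbarwu} when the remaining wedge factor pairs with $e$. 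The main obstacle throughout is bookkeeping the Koszul signs and, more importantly, correctly peeling off which pieces of $\ole\cdot\olu$ (respectively $e\cdot u$) are "$\ii{x}$-type" versus "$\eta\wedge$-type" so that the interior products $\ii{a}$ land on the right factors; I would handle this by splitting $X = a+\zeta$, $e = x+\eta$ into their vector and covector parts from the start and tracking each of the four cross-terms, using the maximal isotropy $\ip{X}{X} = 0$ and $\ip{e}{\cdot}$ identities to collapse them, and using $X\cdot u = 0$ (equivalently $\ii{a}u = -\zeta\wedge u$) wherever $u$ — but not $\olu$ — appears.
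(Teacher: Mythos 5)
Your plan is correct and follows essentially the same route as the paper's proof: Cartan's formula together with the substitutions $du=e\cdot u-H\wedge u$, $\inserts_a u=-\zeta\wedge u$ (from $X\cdot u=0$) and the structural identities \eqref{Eqt:pdbarwu}, \eqref{Eqt:pdXbaru}, with \eqref{Eqt:LieDeroverlineV} obtained exactly as in the paper by applying \eqref{Eqt:pdbarwu} to $W=\inserts_X\olV$ and using that $e\wedge\inserts_X\olV=\duality{e}{X}\olV$ in top degree. Two slips to repair when writing it out: the Cartan formula for $\LieDer_X\olV$ with $\olV\in\sections{\wedge^{2n}\olL}$ involves $d_{\nolL}$ (the differential of the Lie algebroid $\nolL$ acting on $\sections{\wedge^{\bullet}\olL}$), not $d_{\olL}$; and \eqref{Eqt:pdXbaru} computes $\pd(X\cdot\olu)$, not $\barpd(X\cdot\olu)$, so the comparison must be run through $\barpd(X\cdot\olu)=\dH(X\cdot\olu)-(d_{\olL}X+\ole\wedge X)\cdot\olu$ with $\dH(X\cdot\olu)$ expanded by the Leibniz rule --- which is precisely what produces the $+(d_{\olL}X)\cdot\olu$ correction you correctly anticipate.
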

\begin{proof}A basic fact is that
\begin{equation}\label{lu0}
0=X\cdot u=\inserts_a u+ \zeta\wedge u,
\end{equation}
for any $X=a+\zeta\in \sections{\nolL}$. Hence
\begin{eqnarray*}
\barpd{(X\cdot \olu)}&=&\barpd(X\cdot  \olV\cdot
u)=\barpd((\inserts_X \olV)\cdot u)
\\&=&
(d_\nolL \inserts_X \olV)\cdot u-(\inserts_X \olV\wedge e)\cdot u
\qquad\text{(by \eqref{Eqt:pdbarwu})}
\\
&=& (d_\nolL \inserts_X \olV+\inserts_X d_\nolL \olV)\cdot
u+(\duality{e}{X}\olV)\cdot u
\\
&=&(\LieDer_{X}\olV)\cdot u+\duality{e}{X}\olu.
\end{eqnarray*}
This proves Eq. (\ref{Eqt:LieDeroverlineV}). For Eq.
(\ref{Eqt:LieDerau}), we have
\begin{eqnarray*}
 \LieDer_{a}u &=& \inserts_a du + d   \inserts_a u\\
 &=&\inserts_a(\inserts_{x}u+\eta\wedge
u-H\wedge u)-d(\zeta\wedge u) \qquad\text{(by \eqref{dHueu} and
\eqref{lu0})}
\\
&=&-\inserts_x\inserts_a u+\duality{a}{\eta}u -\eta\wedge \inserts_a
u-\inserts_a H\wedge u + H\wedge \inserts_a u -d\zeta
\wedge u +\zeta\wedge du\\
&=&\inserts_x(\zeta\wedge u)+\duality{a}{\eta}u +(\eta-H)\wedge
(\zeta\wedge u)\\
&&\quad-\inserts_a H\wedge u   -d\zeta \wedge u +\zeta\wedge
(\inserts_{x}u+\eta\wedge u-H\wedge u) \qquad\text{(by \eqref{dHueu}
and \eqref{Eqt:pdbarwu})}
\\
&=&(\duality{x}{\zeta}+\duality{a}{\eta})u-\inserts_a H\wedge
u-d\zeta \wedge u.
\end{eqnarray*}
To prove Eq. (\ref{Eqt:LieDeraoverlineu}), we observe that, on one
hand
\begin{eqnarray*}
\dH{(X\cdot \olu)}&=&\barpd{(X\cdot \olu)}+\pd{(X\cdot \olu)}\\
&=&\barpd{(X\cdot \olu)}+(d_{\olL}X )\cdot \olu- (X\wedge \ole
)\cdot \olu \qquad\text{(by \eqref{Eqt:pdXbaru})}.
\end{eqnarray*}
On the other hand, we have
\begin{eqnarray*}
\dH{(X\cdot \olu)}&=&d(\inserts_a \olu+ \zeta\wedge \olu)+H\wedge {(X\cdot \olu)}\\
&=&d\inserts_a \olu+ d\zeta\wedge\olu-\zeta\wedge d\olu+H\wedge
{(X\cdot \olu)}
\\
&=&(d\inserts_a\olu+\inserts_a d\olu) +
d\zeta\wedge\olu-(\inserts_a+\zeta\wedge) d\olu+H\wedge {(X\cdot
\olu)}
\\
&=&\LieDer_a\olu+ d\zeta\wedge\olu-X \cdot (\ole\cdot \olu-H\wedge
\olu)+H\wedge {(X\cdot \olu)} \qquad\text{(by \eqref{dHolu})}
\\
&=&\LieDer_a\olu+ d\zeta\wedge\olu-(X\wedge \ole ) \cdot \olu+X\cdot
(H\wedge
\olu)+H\wedge {(X\cdot \olu)}\\
&=&\LieDer_a\olu+ d\zeta\wedge\olu- (X\wedge \ole) \cdot
\olu+(\inserts_a+\zeta\wedge)(H\wedge \olu)
+H\wedge(\inserts_a \olu+\zeta\wedge\olu)\\
 &=&\LieDer_a\olu+ d\zeta\wedge\olu- (X\wedge \ole) \cdot
\olu+(\inserts_a H)\wedge \olu.
\end{eqnarray*}
This proves the last equation.\end{proof}

Another lemma needed is
\begin{lem}\label{Lem:Mukaivanish}\cite{MR2265463}
The Mukai pairing vanishes in $N_i\times N_k$, unless $i+k=2n$, in
which case it is nondegenerate.
\end{lem}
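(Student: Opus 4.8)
The plan is to produce, on the spinor bundle $\GCCM$, a tensorial operator $\mathcal{N}$ whose eigenspace decomposition is exactly $\GCCM=\bigoplus_{k}N_{k}$ — under the isomorphism $\GCCM\cong\wedge^{\bullet}\olL\otimes N$ it is simply the degree operator of $\wedge^{\bullet}\olL$ — and which is self-adjoint, in a suitable sense, for the Mukai pairing; comparing the eigenvalue of $\mathcal{N}$ on the two arguments of $\Mukai{~}{~}$ then forces the degree constraint. I would work on an open set carrying a nowhere-vanishing $u\in\sections{N}$, fix a local frame $\{w_{a}\}_{a=1}^{2n}$ of $\olL$, and let $\{w^{a}\}$ be the dual frame of $\nolL=\olL^{*}$, so $\duality{w^{a}}{w_{b}}=\delta^{a}_{b}$. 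The first ingredient is the standard self-adjointness of the Mukai pairing under Clifford multiplication by a single vector: for all $v\in\TC\oplus\TsC$ and $\chi,\omega\in\GCCM$,
\[
\Mukai{v\cdot\chi}{\omega}=\Mukai{\chi}{v\cdot\omega},
\]
which follows directly from the definitions of $\Mukai{~}{~}$ and $\cdot^{T}$ and the vanishing $\wedge^{2n+1}(\TsC)=0$, handling the $\TC$- and $\TsC$-parts of $v$ separately.

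I would then introduce $\mathcal{N}=\sum_{a}(w_{a}\cdot)(w^{a}\cdot)$ and $\mathcal{N}'=\sum_{a}(w^{a}\cdot)(w_{a}\cdot)$ on $\sections{\GCCM}$. From the Clifford relations, $(w_{a}\cdot)(w^{a}\cdot)+(w^{a}\cdot)(w_{a}\cdot)$ is multiplication by the scalar $\duality{w^{a}}{w_{a}}$, so $\mathcal{N}+\mathcal{N}'=2n\cdot\mathrm{id}$ because $\sum_{a}\duality{w^{a}}{w_{a}}=2n$; and, using that $\nolL$ annihilates $u$ and that $N_{k}=\wedge^{k}\olL\cdot N$ is spanned by Clifford products $w_{a_{1}}\cdots w_{a_{k}}\cdot u$, a reordering argument shows that $\mathcal{N}$ acts on $N_{k}$ as multiplication by $k$. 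Granting these, for $\chi\in\sections{N_{i}}$ and $\omega\in\sections{N_{k}}$ two applications of self-adjointness give
\[
i\,\Mukai{\chi}{\omega}=\Mukai{\mathcal{N}\chi}{\omega}=\Mukai{\chi}{\mathcal{N}'\omega}=(2n-k)\,\Mukai{\chi}{\omega},
\]
hence $\Mukai{\chi}{\omega}=0$ unless $i+k=2n$. For the nondegeneracy when $i+k=2n$, I would take $0\neq\chi\in N_{i}$, write $\chi=W\cdot u$ with $0\neq W\in\wedge^{i}\olL$ (the map $I$ being an isomorphism), and move $W$ across the pairing by self-adjointness; since a Clifford product of sections of the isotropic bundle $\olL$ equals their wedge product up to sign, $\Mukai{W\cdot u}{W'\cdot u}=\pm\,\Mukai{u}{(W\wedge W')\cdot u}$ for $W'\in\wedge^{2n-i}\olL$. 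As the exterior pairing $\wedge^{i}\olL\otimes\wedge^{2n-i}\olL\to\wedge^{2n}\olL$ is perfect and $W\neq0$, I may choose $W'$ with $W\wedge W'=c\,\olV$, $c\neq0$, whence $(W\wedge W')\cdot u=c\,\olu$ and $\Mukai{W\cdot u}{W'\cdot u}=\pm c\,\Mukai{u}{\olu}\neq0$ by the axiom $\Mukai{u}{\olu}\neq0$. Thus the pairing $N_{i}\times N_{2n-i}\to\wedge^{2n}(\TsC)$ has trivial left kernel, and since $N_{i}$ and $N_{2n-i}$ have equal rank it is nondegenerate.

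The step requiring the most care is the claim $\mathcal{N}|_{N_{k}}=k\cdot\mathrm{id}$: one pushes each $w^{a}$ rightward through $w_{a_{1}}\cdots w_{a_{k}}$ using $w^{a}w_{a_{j}}+w_{a_{j}}w^{a}=\duality{w^{a}}{w_{a_{j}}}=\delta^{a}_{a_{j}}$, the residual term dying on $u$ because $w^{a}\in\nolL$, then sums over $a$ and slides each surviving $w_{a_{j}}$ back to its slot, producing $k$ copies of $w_{a_{1}}\cdots w_{a_{k}}\cdot u$; the signs from the two slides must be checked to cancel in pairs. Everything else — the Clifford relations, the self-adjointness of $\Mukai{~}{~}$, and perfectness of the exterior pairing on $\olL$ — is formal manipulation.
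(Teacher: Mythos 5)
Your argument is correct. Note that the paper itself offers no proof of Lemma~\ref{Lem:Mukaivanish} --- it is quoted from the cited reference --- so you have supplied a genuine, self-contained proof, and it is essentially the standard one. The two load-bearing steps both check out with the paper's conventions. First, the adjointness identity $\Mukai{v\cdot\chi}{\omega}=\Mukai{\chi}{v\cdot\omega}$: for $v=\eta\in\TsC$ it is immediate from $(\eta\wedge\chi)^T=\chi^T\wedge\eta$, and for $v=x\in\TC$ it follows from $(\ii{x}\chi)^T=(-1)^{i-1}\ii{x}(\chi^T)$ on degree-$i$ elements together with $[\ii{x}(\chi^T\wedge\omega)]^{2n}=0$ (since $\wedge^{2n+1}\TsC=0$); the signs cancel to give a plus sign in both cases, consistent with the paper's Eq.~(\ref{Mukaequivariant}). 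Second, with the Clifford relation $zz'+z'z=2\ip{z}{z'}$ and the normalization $\duality{X}{\theta}=2\ip{X}{\theta}$ of Eq.~(\ref{Eqt:dualityLolL}), one indeed gets $w^aw_b+w_bw^a=\delta^a_b$, so $\mathcal{N}+\mathcal{N}'=2n$, and your reordering computation gives $\mathcal{N}|_{N_k}=k$ (the two strings of $j-1$ transpositions each contribute $(-1)^{j-1}$, so the signs do cancel in pairs as you anticipated). The eigenvalue comparison $i=2n-k$ and the nondegeneracy argument via the perfect pairing $\wedge^i\olL\otimes\wedge^{2n-i}\olL\to\wedge^{2n}\olL$ and the axiom $\Mukai{u}{\olu}\neq 0$ are then complete; the equality of ranks of $N_i$ and $N_{2n-i}$ upgrades trivial left kernel to nondegeneracy. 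The only cosmetic imprecision is the phrase ``equals their wedge product up to sign'': for elements of the isotropic bundle $\olL$ the Clifford and wedge products agree exactly, and the sign $(-1)^{i(i-1)/2}$ you need comes from reversing the order of the factors when sliding $W$ across the pairing one vector at a time --- harmless, since only non-vanishing is at stake.
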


\begin{proof}[Proof of Proposition \ref{Thm:modularGC}.]
For an $X=a+\zeta\in \sections{\nolL}$, we assume that
$\barpd{(X\cdot \olu)}=f \olu$, for some function $f\in
C^\infty{(M,\CC)}$. Then Eq. (\ref{Eqt:LieDeroverlineV}) implies
that $ \LieDer_{X}\olV =(f-\duality{e}{X})\olV$ and hence
\begin{equation}\label{Eqt:LieDerlV}
\LieDer_{X} \V =( \duality{e}{X}-f) \V.
\end{equation}
We also have, according to Eq. (\ref{Eqt:LieDerau}) and
(\ref{Eqt:LieDeraoverlineu})
\begin{eqnarray*}
\LieDer_{\rho_\nolL(X)}s&=&\LieDer_a
\Mukai{u}{\olu}=\Mukai{\LieDer_a
u}{\olu}+\Mukai{u}{\LieDer_a\olu}\\
&=&\Mukai{\duality{e}{X} u -(d\zeta+\inserts_a H)\wedge u}{\olu}+
\Mukai{u}{f\olu+(d_{\olL}X )\cdot \olu -(d\zeta+\inserts_a H)\wedge\olu}\\
&=&( \duality{e}{X}+f)\Mukai{u}{\olu}=( \duality{e}{X}+f)s.
\end{eqnarray*}
In the last step, we have applied Eq. (\ref{Mukaequivariant}) and
Lemma  \ref{Lem:Mukaivanish} . In turn, we get
$$
\LieDer_{X} \V\otimes
s+\V\otimes\LieDer_{\rho_\nolL(X)}s=2\duality{e}{X}\V\otimes s.
$$
This proves the first claim. By symmetry, we also have
$$
\LieDer_{W} \olV\otimes
\bar{s}+\olV\otimes\LieDer_{\rho_{\olL}(W)}\bar{s}=
2\duality{\ole}{W}\olV\otimes \bar{s},
$$
for all $W\in\sections{\olL}$. By Eq. (\ref{Mukasymmetry}), we know
$$
\bar{s}=\Mukai{\olu}{u}=(-1)^n\Mukai{u}{\olu}=(-1)^n s.
$$
Thus there holds
$$
\LieDer_{W} \Omega\otimes
{s}+\Omega\otimes\LieDer_{\rho_{\olL}(W)}{s}=
2\duality{\ole}{W}\Omega\otimes {s},
$$
which implies that $2\ole$ is the modular cocycle of $\olL$ with
respect to $\Omega$ and $s$.
\end{proof}


\section{Proof of  the main theorem}\label{Sec:ProofofMain}

We first finish the proof of Proposition
\ref{Prop:TwoMudlestrEquivalent}. \begin{proof}   $\olN$ has an
induced $\olL$-module structure arising from the $\olL$-module
$\module$ (see Eq. (\ref{6})). According to the second statement of
Proposition \ref{Thm:modularGC}, we know that this module structure
is determined by the following equation:
$$
\LieDer_{W}\olu=\duality{\ole}{W}\olu,\quad\forall~ W\in
\sections{\olL}.
$$
This just coincides with the standard $\olL$-module structure
defined by
 Eq. (\ref{Eqt:LieDerWolu}) because
 $$W\cdot\pd \olu=W\cdot ~\ole\cdot
\olu=\duality{\ole}{W} \olu,$$ by Lemma \ref{Lem:XcdotWcdot}.
\end{proof}
Now we are ready to prove the main theorem in this paper.
\begin{proof}[Proof of Theorem \ref{Thm:Main}]
 By Proposition \ref{Thm:modularGC} and  Eq. (\ref{19}), (\ref{20}),
  we conclude that
\begin{eqnarray*}
 \bdees (X\otimes \olu) &=&(d_{\olL }X   + \ole\wedge   X)\otimes
 \olu\,,\\
  \bdel (X\otimes \olu) &=&( -\del X+ \inserts_{e} X)\otimes
 \olu\,,
 \end{eqnarray*}
for all $X\in \sections{\wedge^k \nolL}$ .

Compare with the expression of $\pd$ in Eq. (\ref{Eqt:pdXbaru}), we
immediately know that Diagram (\ref{Digram1}) is commutative. To
prove the commutativity of Diagram (\ref{Digram2}), it suffices to
prove:
 \begin{equation}\label{Eqt:bdelbarpd}
\barpd(X\cdot \olu)=(-\del X+ \inserts_{e} X)\cdot \olu,\quad\forall
X\in
\sections{\wedge^i \nolL}.
\end{equation}

In fact, by Eq. (\ref{Eqt:pdbarwu}),
\begin{eqnarray*}
\mbox{LHS of Eq. (\ref{Eqt:bdelbarpd})
}&=&\barpd(X\cdot \olV\cdot u)=(-1)^{\frac{i(i-1)}{2}}\barpd((\inserts_X\olV)\cdot u)\\
&=& (-1)^{\frac{i(i-1)}{2}}(d_\nolL \inserts_X\olV+e\wedge
\inserts_X\olV)\cdot u .
\end{eqnarray*}
We also have
\begin{eqnarray*}
(\inserts_{e} X)\cdot \olu&=& e\cdot X\cdot \olV\cdot u  =
(-1)^{\frac{i(i-1)}{2}}e\cdot (\inserts_X\olV)\cdot
u=(-1)^{\frac{i(i-1)}{2}}(e\wedge \inserts_X\olV)\cdot u.
\end{eqnarray*}
And by Eq. (\ref{diverse}), (\ref{13bis}),
\begin{eqnarray*}
\del X&=&(-1)^{(i-1)(2n-1)}V^\sharp\Omega^\sharp\del
X\\
&=&(-1)^{(i-1)(2n-1)+i}V^\sharp d_\nolL \Omega^\sharp X =-V^\sharp
d_\nolL \Omega^\sharp X .
\end{eqnarray*}
Hence
\begin{eqnarray*}
-(\del X)\cdot \olu&=&(V^\sharp d_\nolL \Omega^\sharp
X)\cdot \olu\\
&=&(-1)^{\frac{(2n-i+1)(2n-i)}{2}}(d_\nolL\inserts_X \Omega)\cdot
V\cdot \olu=(-1)^{\frac{i(i-1)}{2}}(d_\nolL \inserts_X\olV)\cdot u.
\end{eqnarray*}
This proves Eq. (\ref{Eqt:bdelbarpd}), and the proof of Theorem
\ref{Thm:Main} is thus completed.
\end{proof}

\section{Some  corollaries}
The first obvious result is that, by the isomorphisms
$$\wedge^k \nolL\otimes\module\cong (\wedge^k \nolL)\cdot
\olN=\olN_k=N_{2n-k},
$$
the Dirac generating operator constructed by Theorem \ref{Thm:A}
  for $E=\nolL\oplus\olL$ is exactly
$$
\bdirac=\bdees+\bdee=\pd+\barpd=\dH,
$$
and especially $\fsmile=\bdirac^2=0$.

In Section \ref{subsec:ModularLiealgebroids}, we defined a pair of
operators $\dees$ and $\del$  on $\sections{\wedge A}$, and
similarly $\dee$ and $\dels$  on $\sections{\wedge A^*}$, for any
Lie bialgebroid $(A,A^*)$. Let $\Dee=\dees+\del$,
$\Dees=\dee+\dels$. Their squares yield the pair of
\textbf{Laplacian operators}\cite{AlekseevXu}
\begin{gather}
\lap=\Dee^2=\dees\del+\del\dees:~~ \sections{\wedge^k A}
\to\sections{\wedge^k A}, \label{g12} \\
\laps=\Dees^2=\dee\dels+\dels\dee:~~ \sections{\wedge^k
A^*}\to\sections{\wedge^k A^*}. \label{g13}
\end{gather}
As an immediate corollary of the following result
\begin{thm}\label{thmlaplacian}\cite[Theorem 3.4]{ChenStienon} If $(A,\As)$ is a Lie bialgebroid,
then
 $$\lap=\thalf(\ld{X_0}+\ld{\xi_0}) :~\sections{\wedge A}\to\sections{\wedge
A},$$
$$\laps=\thalf(\ld{X_0}+\ld{\xi_0}) :~\sections{\wedge
A^*}\to\sections{\wedge A^*}.$$
\end{thm}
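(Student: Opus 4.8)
The plan is to read this identity off the Dirac generating operator of the Lie bialgebroid $(A,\As)$, keeping the trivialization data $\Omega,s,V$ of Section~\ref{subsec:ModularLiealgebroids} fixed. By the proposition producing \eqref{19}--\eqref{20} one has $\bdirac=\dees-\del+\mu$ with $\mu:=\thalf\bigl(X_0\wedge\cdot+\ii{\xi_0}\bigr)$, an odd operator on $\sections{\wedge A}$, and by Theorem~\ref{Thm:A} its square is multiplication by some $\fsmile\in\cinf{M,\CC}$. First I would expand $\bdirac^2$. Since $\dees,\del$ are odd with $\dees^2=\del^2=0$, we get $(\dees-\del)^2=-(\dees\del+\del\dees)=-\lap$; since $(X_0\wedge\cdot)^2=(\ii{\xi_0})^2=0$ and $\lb{X_0\wedge\cdot}{\ii{\xi_0}}=\duality{\xi_0}{X_0}$, we get $\mu^2=\tfrac14\duality{\xi_0}{X_0}$; hence
\[ \bdirac^2 \;=\; -\lap \;+\; \lb{\dees-\del}{\mu} \;+\; \tfrac14\duality{\xi_0}{X_0}. \]

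Next I would compute $\lb{\dees-\del}{\mu}$ from its four pieces. Because $\dees$ is a $\wedge$-derivation, $\lb{\dees}{X_0\wedge\cdot}=(\dees X_0)\wedge\cdot$; Cartan's magic formula for the Lie algebroid $\As$ acting on $\sections{\wedge A}$ gives $\lb{\dees}{\ii{\xi_0}}=\ld{\xi_0}$; the Leibniz identity defining the Batalin--Vilkovisky operator $\del$ of the Gerstenhaber algebra $(\sections{\wedge A},\wedge,\ba{\cdot}{\cdot})$ gives $\lb{\del}{X_0\wedge\cdot}\beta=(\del X_0)\,\beta-\ld{X_0}\beta$ with $\ld{X_0}=\ba{X_0}{\cdot}$; and $\lb{\del}{\ii{\xi_0}}$ is an operator lowering degree by $2$. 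Now I invoke that $\bdirac^2=\fsmile$ preserves degree: in the resulting expansion the only summand raising degree is $\thalf(\dees X_0)\wedge\cdot$ and the only one lowering degree is $-\thalf\lb{\del}{\ii{\xi_0}}$, so both vanish; in particular $\dees X_0=0$ (the modular cocycle of $\As$ is $\dees$-closed) and $\lb{\del}{\ii{\xi_0}}=0$. What remains is
\[ \bdirac^2 \;=\; -\lap \;+\; \thalf\bigl(\ld{X_0}+\ld{\xi_0}\bigr) \;+\; h, \qquad h:=-\thalf\,\del X_0+\tfrac14\duality{\xi_0}{X_0}\in\cinf{M,\CC}. \]

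To conclude, I would apply both sides to the constant function $1\in\sections{\wedge^0 A}$: the left-hand side is $\fsmile$, whereas $\lap(1)=0$ (as $\dees 1=\del 1=0$) and $\ld{X_0}1=\ld{\xi_0}1=0$, so the right-hand side equals $h$; hence $\fsmile=h$, and feeding this back into the displayed identity leaves $\lap=\thalf(\ld{X_0}+\ld{\xi_0})$ on $\sections{\wedge A}$. The formula for $\laps$ follows by running the same argument with $A$ and $\As$ interchanged: the relevant operator is then $\bdiracs=\dee-\dels+\thalf(\xi_0\wedge\cdot+\ii{X_0})$ on $\sections{\wedge A^*}$, which again squares to a function by Theorem~\ref{Thm:A}, and identical bookkeeping yields $\laps=\thalf(\ld{\xi_0}+\ld{X_0})$.

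The main obstacle will be disciplined handling of the super-signs in the expansion together with the three bracket identities above; the least obvious of these, $\lb{\del}{\ii{\xi_0}}=0$, I do not expect to establish by direct computation but rather to extract for free from the degree count, which is legitimate precisely because Theorem~\ref{Thm:A} forces $\bdirac^2$ to be a multiplication operator. One must also match the sign convention for $\del$ fixed in \eqref{13}--\eqref{13bis}, since an error there would corrupt the coefficient of $\ld{X_0}$, and keep straight that in each of the two formulas one of $\ld{X_0},\ld{\xi_0}$ is a Schouten bracket (on $\wedge A$, resp.\ $\wedge A^*$) while the other is a Lie algebroid Lie derivative $\lb{\dees}{\ii{\xi_0}}$ (resp.\ $\lb{\dee}{\ii{X_0}}$).
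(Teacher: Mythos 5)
The paper offers no proof of this statement --- it is imported verbatim from \cite{ChenStienon} --- so there is no internal argument to compare against; judged on its own, your derivation is correct and self-contained given the two ingredients you invoke, namely Theorem \ref{Thm:A} and the formulas \eqref{19}--\eqref{20}. The graded expansion of $\bdirac^2$ is right: $(\dees-\del)^2=-\lap$, $\mu^2=\frac{1}{4}\duality{\xi_0}{X_0}$ by the Clifford relation, $\lb{\dees}{X_0\wedge\cdot}=(\dees X_0)\wedge\cdot$, $\lb{\dees}{\ii{\xi_0}}=\ld{\xi_0}$ by Cartan's formula for the Lie algebroid $A^*$, and $\lb{\del}{X_0\wedge\cdot}=\del X_0-\ld{X_0}$ by the Koszul--Batalin--Vilkovisky identity. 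The step worth singling out is your use of the fact that $\bdirac^2$ is a multiplication operator to kill the degree $\pm 2$ components: this yields $\dees X_0=0$ and $\lb{\del}{\ii{\xi_0}}=0$ for free (the latter operator being, up to sign, $\ii{\dee\xi_0}$, so both statements amount to the closedness of the modular cocycles, which your argument thereby reproves rather than assumes). Evaluating the degree-zero identity on $1\in\sections{\wedge^0 A}$ to identify the scalar remainder $h$ with $\fsmile$ and cancel it is also legitimate, since $\lap$, $\ld{X_0}$ and $\ld{\xi_0}$ all annihilate constants. The only points still requiring care are the ones you already flag: the sign in \eqref{13} fixing $\del$, which controls the coefficient of $\ld{X_0}$ and must come out as $+\thalf$ to match the statement, and the tacit identification of $\sections{\wedge A\otimes\module}$ with $\sections{\wedge A}$ via the fixed trivialization of $\module$, which is precisely the setting in which \eqref{19}--\eqref{20} are written. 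I would accept this as a proof.
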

we have

\begin{cor}Let $(\nolL,\olL)$ be the Lie bialgebroid coming from a twisted
generalized complex structure $\JJ$. The Laplacian operators $\lap$
and $\laps$ defined by Eq. (\ref{g12}) and (\ref{g13}) are  given
respectively by
\begin{align}\nonumber
&\lap f=\laps f =\thalf pr_{T}(e+\ole)(f),\quad\forall f\in
\cinf{M,\CC},
\\\label{lap X}
&\lap X = \dbH{(\ole+\thalf e)}{X}-\frac{i}{2}\JJ(\dbH{e}{X}),
\quad\forall X\in \sections{\nolL},
\\\label{laps X}
&\laps W = \dbH{(e+\thalf
\ole)}{W}+\frac{i}{2}\JJ(\dbH{\ole}{W}),\quad\forall W\in
\sections{\olL}.
\end{align}
\end{cor}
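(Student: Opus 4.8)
The strategy is to combine the three identities of Theorem~\ref{thmlaplacian} with the explicit computation of the modular cocycles $X_0=2e$ and $\xi_0=2\ole$ from Proposition~\ref{Thm:modularGC}. Substituting these into $\lap=\thalf(\ld{X_0}+\ld{\xi_0})$ immediately gives $\lap=\ld{e}+\ld{\ole}$ (and likewise $\laps=\ld{e}+\ld{\ole}$), where now $\ld{}$ denotes the Lie derivative along sections of $\nolL$ (resp.\ $\olL$) acting on $\sections{\wedge\nolL}$ (resp.\ $\sections{\wedge\olL}$) coming from the respective Lie algebroid structures. Everything then reduces to rewriting these $\nolL$- and $\olL$-Lie derivatives in terms of the twisted Courant/Dorfman bracket $\dbH{}{}$ on $\TC\oplus\TsC$ and the operator $\JJ$.

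First I would dispose of the scalar case. For $f\in\cinf{M,\CC}$, $\lap f=\laps f=\thalf(\anchor_{\nolL}(X_0)f+\anchor_{\olL}(\xi_0)f)$; since $\anchor_{\nolL}=pr_T|_{\nolL}$ and $\anchor_{\olL}=pr_T|_{\olL}$, and $X_0=2e$, $\xi_0=2\ole$, this is $\thalf pr_T(e+\ole)(f)$, which is the first line of the corollary. (Note $e+\ole$ is real, so this is a genuine real vector field applied to $f$.) Next, for $X\in\sections{\nolL}$, the key point is that the bracket $\ba{\cdot}{\cdot}_{\nolL}$ on $\sections{\nolL}$ is the restriction of the twisted Courant bracket $\cb{\cdot}{\cdot}_H$, which differs from the Dorfman bracket $\dbH{}{}$ only by an exact term that dies under pairing; more usefully, for $e\in\sections{\olL}=\sections{\nolL^*}$ and $X\in\sections{\nolL}$, the Evens--Lu--Weinstein-type Lie derivative $\ld{\xi_0}X=\ld{2\ole}X$ along the \emph{dual} algebroid $\olL$ is exactly the $\olL$-action on $\sections{\wedge\nolL}$, which is computed from the mixed Dorfman bracket in Eq.~\eqref{4}: $\ld{\theta}Y=\db{\theta}{Y}+\inserts_{?}(\dots)$. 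So I would express $\ld{e}X=\ba{e}{X}_{?}$ using the Dorfman bracket on $\TC\oplus\TsC$ together with the projection to $\nolL$, and combine. The projection onto $\nolL$ is precisely $\thalf(1-i\JJ)$ (since $\nolL$ is the $+i$-eigenbundle), so whenever a bracket $\dbH{e}{X}$ or $\dbH{\ole}{X}$ does not automatically land in $\nolL$ one must apply $\thalf(1-i\JJ)$; tracking which terms need this correction is what produces the asymmetric coefficients $\ole+\thalf e$ and $-\tfrac{i}{2}\JJ(\dbH{e}{X})$ in Eq.~\eqref{lap X}, and symmetrically for Eq.~\eqref{laps X}.

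Concretely, the computation I would carry out is: write $\ld{e}X$ (the $\olL$-Lie derivative of $X\in\sections{\wedge\nolL}$) as $pr_{\nolL}\dbH{e}{X}=\dbH{e}{X}-pr_{\olL}\dbH{e}{X}$ and $\ld{\ole}X$ as the $\nolL$-Lie derivative, which is already valued in $\nolL$ and equals $pr_{\nolL}\dbH{\ole}{X}$. Since $\dbH{\ole}{X}$ for $\ole\in\sections{\olL}$, $X\in\sections{\nolL}$ need not lie in $\nolL$, one has $pr_{\nolL}\dbH{\ole}{X}=\thalf(1-i\JJ)\dbH{\ole}{X}$; similarly for the $e$-term. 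Adding $\ld{e}X+\ld{\ole}X$, regrouping the $\dbH{e}{\cdot}$ and $\dbH{\ole}{\cdot}$ pieces and using $\JJ^2=-1$ and orthogonality of $\JJ$ to simplify, collapses to $\dbH{(\ole+\thalf e)}{X}-\tfrac{i}{2}\JJ(\dbH{e}{X})$. The $\laps$ formula follows by the evident $\nolL\leftrightarrow\olL$, $e\leftrightarrow\ole$, $\JJ\to\JJ$ (or $\to-\JJ$, swapping eigenspaces) symmetry; I would simply invoke this symmetry rather than redo the calculation.

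\textbf{Main obstacle.} The delicate part is bookkeeping the projections $\thalf(1\mp i\JJ):\TC\oplus\TsC\to\nolL,\olL$ and identifying the Evens--Lu--Weinstein module Lie derivative $\ld{\theta}$ (defined abstractly on $\sections{\wedge\nolL\otimes\module}$ via the $\olL$-algebroid structure, then transported through the isomorphism $I$) with the ambient Dorfman bracket on $\TC\oplus\TsC$ followed by projection. One must check carefully that the derivation $\ld{\theta}$ appearing in Eq.~\eqref{4} is exactly this module Lie derivative — i.e.\ that the Dorfman-bracket formula for the double of $(\nolL,\olL)$ computes it — and that no extra $\module$-twist contributes once $X_0,\xi_0$ have been absorbed (which is legitimate because, after Proposition~\ref{Thm:modularGC} and the identification of $\olN$ with $\module$, the module factor is literally trivialized). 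Once that dictionary is set up, the remaining algebra — expanding brackets, applying $\JJ^2=-1$, collecting coefficients — is routine, and the asymmetry of the final coefficients is an artifact of choosing to express the answer in terms of $\dbH{}{}$ (which is not $\JJ$-equivariant) rather than the $\JJ$-invariant Courant bracket.
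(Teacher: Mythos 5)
Your proposal is correct and follows essentially the same route as the paper: invoke Theorem \ref{thmlaplacian} together with the modular cocycles of Proposition \ref{Thm:modularGC}, identify the two Lie derivatives as $\LieDer_{\ole}X=\dbH{\ole}{X}$ (no projection needed, since $\nolL$ is closed under the bracket) and $\LieDer_{e}X=pr_{\nolL}(\dbH{e}{X})$ with $pr_{\nolL}=\thalf(1-i\JJ)$, and collect terms. The only blemishes are notational --- you swap the labels $X_0\leftrightarrow\xi_0$ (immaterial, since they enter symmetrically in $\thalf(\LieDer_{X_0}+\LieDer_{\xi_0})$) and at one point assert that $\dbH{\ole}{X}$ may leave $\nolL$ right after correctly stating the opposite --- neither of which affects the final computation.
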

\begin{proof}For $e\in \sections{\olL}$, $\ole\in \sections{\nolL}$,
the Lie derivations   $\LieDer_{e}$ and $\LieDer_{\ole}$ on
$\sections{\nolL}$ are given respectively by:
\begin{eqnarray*}
\LieDer_{e} X &=& pr_{\nolL}(\dbH{e}{X}),  \\
 \LieDer_{\ole} X &=& \dbH{\ole}{X},
\quad\forall X\in\sections{\nolL}.
\end{eqnarray*}

The projections of $\TC\oplus \TsC$ to $\nolL$ and $\olL$ are given
respectively by:
$$
pr_{\nolL}(z)=\thalf(z-i\JJ z),\quad pr_{\olL}(z)=\thalf(z+i\JJ
z),\quad\forall z\in \TC\oplus \TsC.
$$
Thus Eq. (\ref{lap X}) and (\ref{laps X}) follow directly from
Theorem \ref{thmlaplacian}.
\end{proof}

It is well known \cite{MR1262213} that, if $\anchor$ and $\anchors$
denote the anchor maps of a   Lie bialgebroid $(A,A^*)$, the bundle
map \begin{equation}\label{pisharp} \pi\diese=
\anchor\rond(\anchors)^*:\TsCM\to \TCM \end{equation} defines a
(complex) Poisson structure on   $M$.

In particular, for the Lie bialgebroid $(\nolL,\olL)$ coming from
the twisted generalized complex structure $\JJ$, $-i\pi$ is a real
Poisson structure. In fact, up to a factor of $2$, it is given by
  \cite{MR2398985,math/0703298}
\begin{equation}\label{GualtieriP}
P(\xi,\eta)=\ip{\JJ \xi}{\eta}.
\end{equation}

Let us briefly recall the definition of the modular vector field of
a Poisson manifold $(M,\pi)$ \cite{MR1484598}. Let
$\omega\in\OO^{\TOP}(M)$ be a volume form, the modular vector field
with respect to $\omega$ is the derivation $X_{\omega}$ of the
algebra of functions $\cinf{M}$ characterized by
\begin{equation}
\label{tongtong}
\LieDer_{\pisharp(df)}\omega = X_{\omega}(f)\omega .\end{equation}

For the Poisson structure induced from  a Lie bialgebroid,  the
relation between modular cocycles and modular vector fields is
established in the following:
\begin{lem}\cite[Corollary 3.8]{ChenStienon}\label{Cor:extremal}
Let $M$ be an orientable manifold with volume form
$s\in\OO^{\TOP}(M)$ and let $(A,A^*)$ be a real Lie bialgebroid over
$M$ with associated Poisson bivector $\pi$ defined by Eq.
(\ref{pisharp}). Then the modular vector field of the Poisson
manifold $(M,\pi)$ with respect to $s$ is
\begin{equation}\label{Eqt:alluded} X_s=\thalf\big( \anchors(\xi_0)-\anchor(X_0) \big) ,\end{equation}
where $\xi_0$ and $X_0$ are modular cocycles defined by Eq.
(\ref{17}) and (\ref{18}) (choosing arbitrary $V$ and $\Omega$).
\end{lem}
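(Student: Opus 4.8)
The plan is to compute $\LieDer_{\pisharp(df)}s$ directly and then read off the modular vector field from its defining property $\LieDer_{\pisharp(df)}s=X_s(f)\,s$. The crucial point is that the Hamiltonian vector field $\pisharp(df)$ admits two descriptions through the anchors of the bialgebroid. Since $\dees f=(\anchors)^{*}(df)$ and $\dee f=(\anchor)^{*}(df)$, one has $\pisharp(df)=\anchor\bigl((\anchors)^{*}(df)\bigr)=\anchor(\dees f)$; on the other hand, skew-symmetry of $\pisharp$ --- which is exactly the Mackenzie--Xu compatibility that makes $\pi$ an honest bivector --- reads $\anchors\circ(\anchor)^{*}=\bigl(\anchor\circ(\anchors)^{*}\bigr)^{*}=-\pisharp$, so that $\anchors(\dee f)=-\pisharp(df)$. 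Hence $2\,\pisharp(df)=\anchor(\dees f)-\anchors(\dee f)$, and therefore $2\,\LieDer_{\pisharp(df)}s=\LieDer_{\anchor(\dees f)}s-\LieDer_{\anchors(\dee f)}s$.

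I would then substitute $X=\dees f$ into the defining identity \eqref{18} of $\xi_0$ and $\theta=\dee f$ into the defining identity \eqref{17} of $X_0$. The $A$-module action on $V$ and the $A^{*}$-module action on $\Omega$ are given by the top-degree Schouten brackets, which land in the lines $\wedge^{r}A$ and $\wedge^{r}A^{*}$; write $\ba{\dees f}{V}=c\,V$ and $\bas{\dee f}{\Omega}=c'\,\Omega$. Then \eqref{17} and \eqref{18} yield
\[
\LieDer_{\anchor(\dees f)}s=\bigl(\duality{\xi_0}{\dees f}-c\bigr)\,s,\qquad
\LieDer_{\anchors(\dee f)}s=\bigl(\duality{X_0}{\dee f}-c'\bigr)\,s .
\]
Using the adjunctions $\duality{\xi_0}{\dees f}=\duality{df}{\anchors(\xi_0)}$ and $\duality{X_0}{\dee f}=\duality{df}{\anchor(X_0)}$, the whole statement reduces to
\[
2\,\LieDer_{\pisharp(df)}s=\bigl(\duality{df}{\anchors(\xi_0)-\anchor(X_0)}+c'-c\bigr)\,s ,
\]
i.e. to the scalar identity $c=c'$.

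For $c=c'$ I would proceed as follows. First, $\del V=0$ and $\dels\Omega=0$, which is immediate from the defining relations of $\del$ and $\dels$ (Equations \eqref{13}, \eqref{13bis} and the $\dels$-analogue), since $V$ and $\Omega$ are paired to the constant $1$. Next, expanding $\del$ on $\dees f\wedge V$, which vanishes as it lies in $\wedge^{r+1}A$, and using $\del V=0$, the defining property of the Batalin--Vilkovisky operator $\del$ collapses to $\ba{\dees f}{V}=\del(\dees f)\cdot V$; symmetrically, $\bas{\dee f}{\Omega}=\dels(\dee f)\cdot\Omega$. Hence $c=\del(\dees f)=(\del\dees+\dees\del)f=\lap f$ and $c'=\dels(\dee f)=(\dels\dee+\dee\dels)f=\laps f$, the extra summands being zero on functions; and by Theorem~\ref{thmlaplacian}, $\lap$ and $\laps$ coincide on $\cinf{M,\CC}$ --- both equal $\thalf(\ld_{X_0}+\ld_{\xi_0})$ there --- so $c=c'$. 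The two scalars cancel, leaving $\LieDer_{\pisharp(df)}s=\thalf\,\duality{df}{\anchors(\xi_0)-\anchor(X_0)}\,s$, which is the assertion $X_s=\thalf\bigl(\anchors(\xi_0)-\anchor(X_0)\bigr)$. Independence of the choice of $\Omega$ (with $V$ its dual section) is then automatic: $\Omega\mapsto g\Omega$, $V\mapsto g^{-1}V$ sends $X_0\mapsto X_0+\dees(\log g)$ and $\xi_0\mapsto\xi_0-\dee(\log g)$, while $\anchors(\dee h)+\anchor(\dees h)=0$ for every function $h$, once again by skew-symmetry of $\pisharp$.

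The only step I expect to present any difficulty is $c=c'$: this is precisely where the full Lie bialgebroid compatibility is needed --- it forces the $A$-divergence of $\dees f$ relative to $\Omega$ to agree with the $A^{*}$-divergence of $\dee f$ relative to $V$ --- and channelling it through Theorem~\ref{thmlaplacian} keeps the argument short and internal to the paper. Everything else is routine bookkeeping with the anchors, the Evens--Lu--Weinstein module structures \eqref{6}, \eqref{17}, \eqref{18}, and the Schouten brackets.
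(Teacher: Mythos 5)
The paper does not actually prove this lemma --- it is quoted verbatim from \cite{ChenStienon}*{Corollary 3.8} --- so there is no in-paper argument to compare against; what you have written is a self-contained derivation, and it is correct. Your route: write $2\pisharp(df)=\anchor(\dees f)-\anchors(\dee f)$ (using $\anchor(\dees f)+\anchors(\dee f)=0$, i.e.\ skew-symmetry of $\pisharp$, which is part of the Mackenzie--Xu compatibility), feed $X=\dees f$ and $\theta=\dee f$ into the defining identities \eqref{18} and \eqref{17}, and observe that the two unwanted scalars are $c=\del(\dees f)=\lap f$ and $c'=\dels(\dee f)=\laps f$, which cancel by Theorem \ref{thmlaplacian}. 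I checked the two sign-sensitive steps against the paper's conventions: from \eqref{13bis} with $\beta=X\in\sections{A}$ one gets $\LieDer_{X}\Omega=-(\del X)\,\Omega$, hence $\LieDer_{X}V=(\del X)\,V$, and from the $\dels$-diagram one gets $\LieDer_{\xi}\Omega=(\dels\xi)\,\Omega$; so indeed $c=\lap f$ and $c'=\laps f$ with no stray signs, and since $\del$ and $\dels$ vanish on functions the compositions $\del\dees$ and $\dels\dee$ really are the full Laplacians there. Compared with the generating-operator machinery of \cite{ChenStienon}, your argument is more elementary and uses only material already displayed in this paper (Equations \eqref{17}, \eqref{18}, the duality diagrams \eqref{12}--\eqref{13bis}, and Theorem \ref{thmlaplacian}); the price is that the single nontrivial input, $\lap f=\laps f$, is outsourced to Theorem \ref{thmlaplacian}, which is itself a theorem of \cite{ChenStienon} --- so the logical dependence on that reference is not eliminated, only relocated. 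Two cosmetic remarks: $\del V=0$ holds because $\dee$ kills constants (via $\del V=\del(V\diese 1)=-V\diese(\dee 1)$), not because $\duality{\Omega}{V}=1$; and in the independence check the rescaling function $g$ should be nowhere vanishing (work with $dg/g$ rather than $\log g$ to avoid a positivity assumption). Neither affects the validity of the proof.
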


Hence as an immediate consequence, we obtain the following result of
Gualtieri \cite[Proposition 3.27]{math/0703298}.
\begin{cor}The modular vector field of the Poisson structure $P$ defined
in Eq. (\ref{GualtieriP}) is given by
$$
\frac{i}{2} pr_{\TC}(e-\ole),
$$
with respect to the volume form $s=\Mukai{u}{\olu}$.
\end{cor}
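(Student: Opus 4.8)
The plan is to read the modular vector field of the Poisson manifold directly off the Lie bialgebroid data: Proposition~\ref{Thm:modularGC} has already computed the relevant modular cocycles, and Lemma~\ref{Cor:extremal} converts modular cocycles into the modular vector field. First I would recast Proposition~\ref{Thm:modularGC} in the notation of Eqs.~\eqref{17}--\eqref{18} for the bialgebroid $(A,A^{*})=(\nolL,\olL)$ (with $\nolL^{*}=\olL$ via Eq.~\eqref{Eqt:dualityLolL}). Choosing on $M$ the volume form $s=\Mukai{u}{\olu}$, together with the top forms $\V\in\sections{\wedge^{2n}\nolL}$ and $\Omega=(-1)^{n}\olV\in\sections{\wedge^{2n}\olL}$, the proposition says precisely that the modular cocycle of $\nolL$ (relative to $\V$) is $\xi_{0}=2e\in\sections{\olL}$ and the modular cocycle of $\olL$ (relative to $\Omega$) is $X_{0}=2\ole\in\sections{\nolL}$, where $e$ is the section furnished by Lemma~\ref{Lem:Existenceofe}.

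Next I would apply Lemma~\ref{Cor:extremal}. Although it is stated for real Lie bialgebroids, its proof is $\CC$-linear and hence applies unchanged to the complex bialgebroid $(\nolL,\olL)$; it then gives the modular vector field of $(M,\pi)$ with respect to $s$ as
$$
X_{s}=\thalf\bigl(\rho_{\olL}(\xi_{0})-\rho_{\nolL}(X_{0})\bigr)=\rho_{\olL}(e)-\rho_{\nolL}(\ole).
$$
Since $\nolL$ and $\olL$ are Dirac structures in the $H$-twisted Courant algebroid on $\TC\oplus\TsC$, their anchors are the restrictions of the Courant anchor $\rho=pr_{\TC}$, so $X_{s}=pr_{\TC}(e)-pr_{\TC}(\ole)=pr_{\TC}(e-\ole)$. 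Finally, because the modular vector field is linear in the Poisson bivector, the modular vector field of $P$ is the scalar multiple of $pr_{\TC}(e-\ole)$ prescribed by the proportionality between the bivector $P$ of Eq.~\eqref{GualtieriP} and the bivector $\pi$ of Eq.~\eqref{pisharp} --- the ``factor of $2$'' recorded after Eq.~\eqref{pisharp}. To pin down that scalar I would compute $(\rho_{\olL})^{*}:\TsC\to\nolL$ explicitly (using $\duality{X}{\theta}=2\ip{X}{\theta}$ and the isotropy of $\olL$, it sends $\xi$ to the projection $pr_{\nolL}(\xi)$), compose with $\rho_{\nolL}=pr_{\TC}$, and compare the resulting expression for $\pi\diese$ with $P^{\sharp}(\xi)=\thalf\,pr_{\TC}(\JJ\xi)$, which is read off from Eq.~\eqref{GualtieriP} via Eq.~\eqref{naturalpairing}.

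The only step that demands genuine care is this last one. One has to keep track of the several factors of $\thalf$ (above all the one built into the pairing~\eqref{naturalpairing}), the factors of $i$ coming from the $\pm i$-eigenspace projections, and the various signs, in order to conclude that the proportionality between $P$ and $\pi$ produces exactly the coefficient $\tfrac{i}{2}$ and the combination $e-\ole$ rather than $\ole-e$. Everything else is a mechanical substitution of Proposition~\ref{Thm:modularGC} into Lemma~\ref{Cor:extremal}, together with the identification of the anchors of $\nolL$ and $\olL$ with $pr_{\TC}$.
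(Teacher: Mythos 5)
Your proposal is correct and follows exactly the route the paper intends: substitute the modular cocycles $\xi_0=2e$, $X_0=2\ole$ from Proposition~\ref{Thm:modularGC} into Lemma~\ref{Cor:extremal} to get $pr_{\TC}(e-\ole)$ for $\pi$, then rescale by the proportionality between $P$ and $\pi$. The paper states this as an ``immediate consequence'' without writing out the details, so your version is if anything more explicit, in particular in flagging where the factor $\tfrac{i}{2}$ must be tracked.
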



\begin{bibdiv}
\begin{biblist}
\bibselect{rhinitis}
\end{biblist}
\end{bibdiv}


\end{document}